% % % % % % % % % % % % % % %PACCHETTI % % % % % % % % % % % % % % %
\documentclass[a4paper,oneside]{article}
\usepackage[english]{babel}
\usepackage{mathtools}
\usepackage{amsthm}

\usepackage[a4paper, left=3 cm, right=3cm]{geometry}

\usepackage[dvipsnames]{xcolor}
\usepackage[draft, markup=sout, authormarkup=brackets]{changes}
\definechangesauthor[name=Favre, color=Plum]{FG}

\allowdisplaybreaks

\usepackage{amssymb}
\usepackage{enumerate}
\usepackage{enumitem}
\usepackage{braket}
\usepackage{slashed}
\usepackage{bbold}
\usepackage{amssymb}
\usepackage{quoting}
\usepackage[final]{graphicx}
\usepackage{textcomp} %per \fsmile 
\usepackage{latexsym}
\usepackage[symbol*,perpage]{footmisc}
\setcounter{tocdepth}{5}
\usepackage{wasysym}
\usepackage{cleveref}
\usepackage[square,numbers]{natbib}
\usepackage[final]{pdfpages}
\usepackage{tikz}
\usepackage{subfiles}
\usepackage{lipsum}

\usepackage[title]{appendix}

% % % % % % % % % % % % % % % %FONT SETTING % % % % % % % % % % % % % % % %
%\linespread{1,2}

%%\usepackage{courier}
%%\renewcommand*\familydefault{\ttdefault}

%%\ttfamily\setlength{\spaceskip}{0.3em plus 0.3em}%
%\usepackage{concrete}
%\usepackage{concmath}
%\usepackage{tgpagella}
%%\usepackage{qpxmath}

% % % % % % % % % % % % % % % % % % % % % % % % % % % % % % % % % % %

% % % % % % % % % % % %BOZZA % % % % % % % % % % % % % % % %
\usepackage{changes}

\usepackage{xargs}                   
\definechangesauthor[name={Favre}, color=red]{fg}
\usepackage{flowfram} % morstra margini

\usepackage[textsize=normalsize, color=black!10]{todonotes} %Aggiungere l'opzione "disable" al pacchetto TODONOTES per togliere le note dal PDF

\newcommandx{\commento}[2][1=]{\todo[linecolor=black,backgroundcolor=black!10,bordercolor=black,#1]{\scriptsize#2}}
\newcommandx{\commentob}[2][1=]{\todo[linecolor=blue,backgroundcolor=blue!25,bordercolor=blue,#1]{\scriptsize#2}}
% % % % % % % % % % % % % % % % % % % % % % % % % % % % % % %

% % % %% % % %AMBIENTI MATEMATICI% % % % % % % % % % % %
\newtheoremstyle{break}{\topsep}{\topsep}{\itshape}{}{\bfseries}{}{\newline}{}

\newtheorem{thm}{Theorem}[]
\newtheorem*{thm-nn}{Theorem}
\newtheorem{lem}[thm]{Lemma}
\newtheorem*{lem-nn}{Lemma}

\newtheorem*{cor-nn}{Corollary}

\newtheorem{defn}[thm]{Definition}

\theoremstyle{remark}

\relpenalty=10000    %evita di andare a capo nelle "formule in riga". Forzare il LaTeX è sbagliato, ma a volte lo si deve fare.
\binoppenalty=10000

\allowdisplaybreaks %formule si divido se cambia pagina
% % % % % % % % % % % % % % % % % % % % % % % % % % % %

% % % % % % % % % % %Mia Circuitazione Discreta % % % % % % % % % % %
\let\TeXcirc\circ
\def\circ{{\raisebox{1pt}{\mathsurround1pt$\TeXcirc$}}}

% % % % % % % % % % % % % % % % % % % % % % % % % % % % % % % % % % %

% % % % % % % % % % %Mia \fsmile % % % % % % % % % % %
\let\TeXtextendash\textendash
\def\textendash{\scalebox{1.3}{{\raisebox{-4.5pt}{\mathsurround0pt$\TeXtextendash$}}}}

% % % % % % % % % % % % % % % % % % % % % % % % % % % % % % % % % % %

% % % % % % % % % COMANDI% % % % % % % % % % % % % % %

\newcommand{\calH}{{\mathcal H}}

\newcommand{\grad}{\nabla_x} %gradiente 
 %laplaciano
 %divergenza

\newcommand{\R}{\mathbb{R}}

\newcommand{\intv}[1]{\int_{\R^d}#1\,\dv}

\newcommand{\ki}{k_{ij}}
\newcommand{\kj}{k_{ji}}
\newcommand{\sumi}{\sum_{i=1}^{N}}
\newcommand{\sumj}{\sum_{j=1}^{N}}
\newcommand{\sumij}{\sum_{i,j=1}^{N}}

 % immersione compatta
\newcommand{\sprod}[2]{\langle #1 , #2 \rangle} %prodotto scalare
 %convergenza debole-*
 %convergenza debole
 %immersione continua
\newcommand{\dv}{dv\;}

\newcommand{\derivtintero}{\frac{d}{dt}}

\def\sfT{\mathsf{T}}
\def\sfL{\mathsf{L}}
\def\sfH{\mathsf{H}}
\def\sfA{\mathsf{A}}
\def\eps{\varepsilon}

 %integrale spaziale
 %norma di #1 nello spazio #2

 %norma di #1 nello spazio H_{div}
 %norma di #1 nello spazio V_{div}
 %norma di #1 nello spazio H
 %norma di #1 nello spazio V

 %norma quadra di #1 nello spazio H_{\dive}
 %norma quadra di #1 nello spazio V_{\dive}
 %norma quadra di #1 nello spazio H
 %norma quadra di #1 nello spazio V

% % % % % % % % % % % % % % % % % % % % % % % % % % %

% % % % % % % % % %PERSONALIZZAZIONI % % % % %
\usepackage{fancyhdr}
\usepackage{lastpage}
%\pagestyle{fancy}
%\fancyhf{}
%\rhead{C.~Schmeiser}
%\chead{Hypocoercivity and fast reaction limit for linear kinetic-reaction systems}
%\lhead{G.~Favre}
%\rhead{larva - draft}
%\lfoot{\today}
%\rfoot{\thepage\,- \pageref{end}}
 %per la riga a pie di pagina
\setlength{\parindent}{15pt} %no indent in paragrafi
\setlength{\parskip}{5pt} %no indent in paragrafi

\makeatletter
\renewcommand\tableofcontents{%
    \@starttoc{toc}%
}
\makeatother
% % % % % % % % % % % % % % % % % % % % % % % %
%\numberwithin{equation}{section}

\newcommand{\nc}{\normalcolor}

\begin{document}
% % % % % % % % %OPZIONI % % % % % % % % % % % %
\nocite{*} %cita tutta la bibliografia
%\listofchanges[style=<list|summary>] %stampa la lista dei cambiamenti relativi al pacchetto CHANGES, vedere BOZZA nel preambolo
%\tableofcontents

\def\comment#1{\color{red}#1\color{black}}

\title{\vspace{-2cm}\textbf{Hypocoercivity and fast reaction limit for linear reaction networks with kinetic transport}}

\author{{\Large Gianluca Favre\footnote{Faculty of Mathematics, University of Vienna,	Oskar-Morgenstern-Platz 1, 1090 Wien, Austria  \newline	E-mail: {\tt gianluca.favre@univie.ac.at}} 
\hspace{15pt} and \hspace{15 pt} 
Christian Schmeiser\footnote{Faculty of Mathematics, University of Vienna,	Oskar-Morgenstern-Platz 1, 1090 Wien, Austria \newline E-mail: {\tt christian.schmeiser@univie.ac.at}}}
}
\date{}
\maketitle

% % % % % % % % % % %MAIN MATTER % % % % % % % % % % %
\begin{abstract}
\noindent
The long time behavior of a model for a first order, weakly reversible chemical reaction network is considered, where the movement
of the reacting species is described by kinetic transport. The reactions are triggered by collisions with a nonmoving background with constant
temperature, determining the post-reactional equilibrium velocity distributions. Species with different particle
masses are considered, with a strong separation between two groups of light and heavy particles. As an approximation, the
heavy species are modeled as nonmoving. Under the assumption of at least one moving species, long time convergence
is proven by hypocoercivity methods for the cases of positions in a flat torus and in whole space. In the former case
the result is exponential convergence to a spatially constant equilibrium, and in the latter it is algebraic decay to zero, at the 
same rate as solutions of parabolic equations. This is no surprise since it is also shown that the macroscopic (or reaction
dominated) behavior is governed by the diffusion equation.
\end{abstract}

\section{Introduction}

We consider $N$ chemical species $S_1,\ldots, S_N$ with different particle masses moving in a periodic box
or in whole space. The interaction with a stationary background with constant temperature $T$ triggers first order chemical reactions with reaction rates 
independent of the velocity of the incoming particle. The velocity of the outgoing particle is sampled from a Maxwellian distribution 
with parameters taken from the background, i.e.~mean velocity zero and temperature $T$. The resulting reaction network is assumed 
to be connected and weakly reversible, meaning that for each reaction $S_i\to S_j$ there exists a reaction path 
$S_j \to \cdots \to S_i$. 

These assumptions lead to a system of $N$ linear kinetic transport equations for the phase space number densities of the reacting 
species. We shall make the additional assumption that the species can be split into two groups of \emph{light} and \emph{heavy} 
particles, where the particle masses are of comparable size within each group but strongly disparate between the groups. 
A corresponding nondimensionalization of the equations, assuming at least one light species, will suggest a simplified model, where 
the heavy particles do not move. As a result we consider a system of kinetic equations (for the light species) coupled, via the
reaction terms, to a system of ordinary differential equations (pointwise in position space, for the heavy species).

The construction of equilibrium solutions is straightforward. In equilibrium, the position densities are constant, 
and the velocity distributions of the light particles are Maxwellians. The position densities are complex balanced equilibria
of the reaction network. Existence and uniqueness for given total mass are standard results of 
the theory of chemical reaction networks.

Our main results are exponential convergence to equilibrium in the case of the periodic box and algebraic decay to zero
in whole space. In both situations the rates and constants are computable.
Although general results for Markov 
processes imply that relative entropies are nonincreasing \cite{FonJou}, the decay result is not obvious, since the entropy 
dissipation is not coercive relative to the equilibrium. We employ the abstract $L^2$-hypocoercivity method of 
\cite{Schmeiser2,Schmeiser} and its extension to whole space problems \cite{BDMMS}.
The main difficulty is the proof of \emph{microscopic coercivity,} meaning here that the reaction terms without the
transport produce exponential convergence to a \emph{local equilibrium,} where the total number density of all species 
might still depend on position and time. Two alternative proofs are presented. In the first one, relaxation in velocity space
is separated from relaxation to chemical equilibrium and known results for the latter \cite{Fellner} could be used. The second 
proof extends the proof in \cite{Fellner} by introducing reaction paths in species-velocity space. For completeness and 
comparability we fully present both proofs, showing that the second proof never gives a worse result.

The second result is a macroscopic or fast-reaction limit. For length scales large compared to the mean free path between
reaction events and for the corresponding diffusion time scales, the system is in local equilibrium and the total number density 
solves the heat equation.

Systematic approaches to hypocoercivity have been started in \cite{MouNeu, Villani}, where Lyapunov functions 
based on modified $H^1$-norms are constructed. More recently, an approach without smoothness assumptions on initial
data, motivated by \cite{Hereaux}, has been developed in \cite{Schmeiser2, Schmeiser}, see \cite{Desvillettes, Mouhot-fr} for overviews. Recently the latter approach has been extended to the analysis of algebraic decay rates in whole space problems 
\cite{BDMMS}.
Hypocoercivity for systems of kinetic equations coupled by linearized collision terms has been shown in \cite{TU-Zamponi-Ester}. 
For a nonlinear system modeling a second order pair generation-recombination reaction, both hypocoercivity and the fast reaction
limit have been analyzed in \cite{Neumann-Schmeiser}.

This work can be seen as an extension of the corresponding result for linear reaction diffusion models \cite{Fellner},
which has recently been extended to general mass action kinetics \cite{FelTan}, bringing the theory for reaction diffusion models
close to the best results on the global attractor conjecture \cite{GAC-Horn} for ODE models without transport \cite{GAC-Craciun}.

Many extensions of the present results are desirable. Besides the inclusion of collision effects and of second order
reactions, questions of energy and momentum balance pose significant challenges, where a trade-off between mathematical 
manageability and modeling precision has to be found. One goal is the rigorous justification of the derivation of reaction diffusion
systems from kinetic models as an extension of results for linear cases \cite{BisDes}. 

Finally, we describe the structure of the rest of this article. In the following section the kinetic model is formulated
including a dimensional analysis and the reduction to a system with partially nonmoving species. The formal macroscopic 
limit is presented and our main results on the long term behavior of solutions and on the rigorous justification of the macroscopic 
limit are formulated. In Section \ref{sec:micro} our main technical result on 'microscopic coercivity' is proven, i.e.~a spectral gap
for the reaction operator. Sections \ref{sec:hypo} and \ref{sec:macro-limit} are concerned with the proofs of our main results on long 
time behaviour and, respectively, on the rigorous macroscopic limit.

\section{The model -- main results}\label{sec:results}
We denote the chemical species by $S_1,\dots,S_N$ and the reaction constant for the reaction $S_i\to S_j$ by $k_{ji}\ge 0$, 
$i,j=1,\ldots,N$, where $k_{ji}=0$ means that the reaction does not occur.
More completely, also including velocities $v\in\R^d$, we assume that the jump $(S_i,v) \to (S_j,v')$ occurs with rate
constant $k_{ji}M_j(v')$, as described above independent of the incoming velocity, where the Maxwellian distribution is given by
 \begin{equation*}
M_i(v) = \bigg(\frac{2 \pi k_B T}{m_i}\bigg)^{-d/2} \exp \bigg(- \frac{|v|^2 m_i}{2 k_B T} \bigg) \,,
\end{equation*}
with the Boltzmann constant $k_B$, the constant given background temperature $T$, and the particle masses 
$m_1\le \cdots \le m_N$ of the respective species $S_1,\ldots,S_N$. Actually all our results can be proven with $M_1,\ldots,M_N$ 
replaced by arbitrary probability distributions with mean zero and finite fourth order moments.

The phase space number density of species $S_i$ at time $t\ge 0$ is denoted by $f_i(x,v,t)\ge 0$, $i=1,\ldots,N$, with the position 
variable $x$. We consider two cases: 
\begin{enumerate}
\item \emph{Periodic box:} $x\in \mathbb{T}^d$, the flat $d$-dimensional torus, represented by the cube 
$[0,L]^d$ with periodic boundary conditions for $f_1,\ldots,f_N$.
\item \emph{Whole space:} $x\in\R^d$, with $f_1,\ldots,f_N$ integrable, i.e.~a finite total number of particles.
\end{enumerate}
In the following, integrations with respect to $x$ will be written over $\Omega$, where $\Omega = [0,L]^d$ for the periodic box and $\Omega = \R^d$ for whole space.

The phase space distributions satisfy the evolution system
\begin{equation}
\label{eq: k-r equation}
\partial_t f_i + v \cdot \grad f_i = \sumj \big(\ki \rho_j M_i - \kj f_i\big) \,,\qquad i=1,\ldots,N\,,
\end{equation}
where the left hand side describes free transport and the right hand side the chemical reactions with position densities
\begin{equation}\label{def:rho}
    \rho_j(x,t) = \intv{f_j(x,v,t)} \,,
\end{equation}
where we will sometimes also use the notation $\rho_{f,j}$ to avoid ambiguity.
We assume that there are $N_l\ge 1$ light species $S_1,\ldots,S_{N_l}$ and $N-N_l$ heavy species $S_{N_l+1},\ldots,S_N$.
The separation of the two groups is expressed in the assumption 
$$
  \mu:= \frac{m_{N_l}}{m_{N_l+1}} \ll 1 \,.
$$
In a nondimensionalization we introduce as reference velocity the thermal velocity 
$$
  v_{th} := \sqrt{\frac{k_B T}{m_{N_l}}}
$$
of the heaviest light species $S_{N_l}$. As reference time $\overline t$ we choose an average value of $k_{ij}^{-1}$, $i,j = 1,\ldots, N$.
The reference length is given by $\overline x = \overline t \,v_{th}$. After the nondimensionalization
$$
   v \to v_{th} v \,,\quad t \to \overline t t \,,\quad x \to \overline x x \,,\quad f_i \to (\overline x v_{th})^{-d} f_i \,,\quad M_i \to v_{th}^{-d} M_i\,,
   \quad k_{ij} \to \frac{k_{ij}}{\overline t} \,,\quad L \to \overline x L \,,
$$
the equations \eqref{eq: k-r equation}, \eqref{def:rho} look the same, but with
\begin{equation}\label{Mi-scaled}
   M_i(v) =  \left(2 \pi \theta_i\right)^{-d/2} \exp \left(- \frac{|v|^2}{2\theta_i} \right) \,,\qquad \theta_i = \frac{m_{N_l}}{m_i} \,,
   \qquad i = 1,\ldots,N\,.
\end{equation}
In particular we have $\theta_i \ge 1$, $i=1,\ldots,N_l$, for the light particles and $\theta_i = O(\mu)$, $i=N_l + 1,\ldots,N$,
for the heavy particles, such that $M_i(v) \to \delta(v)$, $i=N_l + 1,\ldots,N$, in the distributional sense as $\mu\to 0$. In this
limit it is consistent to also look for solutions, where the heavy particles are nonmoving, i.e.~$f_i(x,v,t) = \rho_i(x,t)\delta(v)$, 
$i=N_l + 1,\ldots,N$. Therefore, for the rest of this work we shall consider the system
\begin{eqnarray}
  \partial_t f_i + v \cdot \grad f_i &=& \sumj \big(\ki \rho_j M_i - \kj f_i\big) \,,\qquad i=1,\ldots,N_l\,,\label{eq:k-r}\\
  \partial_t \rho_i &=& \sumj \big(\ki \rho_j - \kj \rho_i\big) \,,\qquad i=N_l+1,\ldots,N\,,\label{eq:r}
\end{eqnarray}
with $N_l\ge 1$ and with $M_i$ given by \eqref{Mi-scaled}, subject to initial conditions
\begin{equation}
  f_i(x,v,0) = f_{I,i}(x,v) \,,\quad \rho_j(x,0) = \rho_{I,j}(x) \,,\quad x\in\Omega\,,\, v\in\R^d\,,\, i \le N_l\,,\,
    j > N_l \,,
\end{equation}
with initial data satisfying $f_{I,i}\in L_+^1(\Omega\times\R^d)$, $\rho_{I,j}\in L_+^1(\Omega)$. For simplicity of notation 
we formally set $f_i(x,v,t) = \rho_i(x,t)M_i(v)$ with $M_i(v) = \delta(v)$, $i> N_l$, and write the system \eqref{eq:k-r},
\eqref{eq:r} in the equivalent form
\begin{equation}\label{eq:full}
   \partial_t f_i + \sigma_i \,v \cdot \grad f_i = \sumj \big(\ki \rho_j M_i - \kj f_i\big) \,,\qquad i=1,\ldots,N\,,
\end{equation}
with $\sigma_i = 1$ for $i\le N_l$ and $\sigma_i = 0$ otherwise. We shall consider initial value problems with
\begin{equation}\label{eq:IC}
  f(x,v,0) = f_I(x,v) \,,\qquad x\in\Omega \,,\quad v\in\R^d \,.
\end{equation}

The system \eqref{eq:full} conserves the total number of particles: The total position density 
$$
  \rho(x,t) := \sum_{i=1}^N \rho_i(x,t)
$$
satisfies
\begin{equation}\label{mass-cons}
  \partial_t \rho + \nabla_x\cdot \left( \sum_{i=1}^{N_l} \int_{\R^d} v f_i \,dv\right) = 0 \,,
\end{equation}
and therefore
$$
  \int_{\Omega} \rho(x,t)dx = {\textbf M} := \sum_{i=1}^{N_l} \int_{\Omega} \int_{\R^d} f_{I,i}(x,v) dv\,dx 
   + \sum_{j=N_l+1}^N \int_\Omega\rho_{I,j}(x)dx \,,\qquad t\ge 0\,.
$$

\subsection*{Local and global equilibria}
\begin{defn}[Local equilibrium]
A state $f(x,v,t)=(f_1(x,v,t), \ldots, f_N(x,v,t))$ is called a local equilibrium for \eqref{eq:full}, if it balances the reactions, i.e., if
\begin{equation}\label{def: equilibrio}
\sumj \big(\ki \rho_j M_i - \kj f_i\big) =0\,,\qquad i=1,\ldots,N\,.
\end{equation}
\end{defn}

The set of all local equilibria can be described in terms of properties of the directed graph with nodes $S_1,\ldots,S_N$ and
edges $S_i \to S_j$, when $k_{ji}>0$. Roughly speaking, there is a simple characterization of local equilibria, if the graph has enough edges.

\noindent{\bf Assumption A1:} The directed graph corresponding to the reaction network is \emph{connected} and \emph{weakly reversible,} 
which means that for each pair $(i,j)\in \{1,\ldots,N\}^2$ there exists  a \emph{path} $(j=i_0,i_1,\ldots,i_{P_{ij}}=i)$ such that 
$k_{i_p i_{p-1}}>0$, $p=1,\ldots,P_{ij}$.

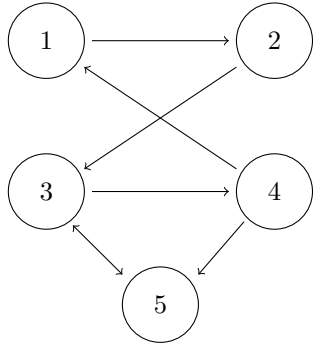
\begin{figure}\begin{center}
\begin{tikzpicture}
	\draw (0.5,3.5)circle (0.5);
	\draw (3.5,3.5)circle (0.5);
	\draw (0.5,1.5)circle (0.5);
	\draw (3.5,1.5)circle (0.5);
	\draw (2,0)circle (0.5);
	
	\draw [->] (1.1,3.5)--(2.9,3.5);
	\draw [->] (1.1,1.5)--(2.9,1.5);
	\draw [->] (3,3.15)--(1,1.8);
	\draw [<-] (1,3.15)--(3,1.8);
	\draw [->] (3.1,1.1)--(2.5,0.4);
	\draw [<->] (0.85,1.05)--(1.5,0.4);
	
	\node at (0.5,3.5) {1};
	\node at (3.5,3.5) {2};
	\node at (0.5,1.5) {3};
	\node at (3.5,1.5) {4};
	\node at (2,0) {5};
\end{tikzpicture}\end{center}
\caption{A connected and weakly reversible reaction network.
Examples for shortest path lengths: $P_{14} = 1$, $P_{52} = 2$, $P_{25} = 4$, 
the latter with the path $(5,3,4,1,2)$.}
\end{figure}

An example is given in Figure 1. Note that the path from $j$ to $i$ is in general not unique. For the following a fixed choice of a path of minimal length $P_{ij}$ is used for each pair $(i,j)$. This also means that paths are not self-intersecting
in the sense that each reaction $S_{i_{p-1}}\to S_{i_p}$ appears only once.

\begin{lem}\label{lem:NL}
Let Assumption A1 hold. Then every local equilibrium is of the form $\rho(x,t)F(v)$ with
$$
  F_i(v) = \eta_i M_i(v) \,,\quad i = 1,\ldots,N \,,
$$
where $\rho(x,t)$ is arbitrary and $(\eta_1,\ldots,\eta_N)$ is the unique solution of
\begin{equation}\label{eq:eta}
   \sumj \big(\ki \eta_j - \kj \eta_i\big) = 0 \,,\quad i = 1,\ldots,N \,,\qquad \sum_{i=1}^N \eta_i = 1 \,,
\end{equation}
satisfying $\eta_i > 0$, $i=1,\ldots,N$.
\end{lem}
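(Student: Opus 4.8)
The plan is to analyze the local equilibrium condition \eqref{def: equilibrio} first in velocity space and then in species space. First I would observe that for each heavy species ($i>N_l$) the condition reads $\sum_j(k_{ij}\rho_j - k_{ji}\rho_i)=0$, a pure ODE balance for the scalar $\rho_i$, while for each light species ($i\le N_l$) the condition reads $\sum_j(k_{ij}\rho_j M_i - k_{ji}f_i)=0$. Since the only $v$-dependence on the left-hand side of the light equation comes through $M_i(v)$ and $f_i(v)$, and the sum $\sum_j k_{ij}\rho_j$ is a nonnegative constant in $v$, solving for $f_i$ gives $f_i(x,v,t) = \left(\sum_j k_{ij}\rho_j\right)\big/\left(\sum_j k_{ji}\right) \, M_i(v)$, provided $\sum_j k_{ji}>0$; weak reversibility and connectedness (Assumption A1) guarantee every node has at least one outgoing edge, so this denominator is positive. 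Hence every local equilibrium is automatically of the product form $f_i = \rho_i(x,t) M_i(v)$ with $\rho_i$ proportional to $M_i$'s coefficient, and it remains to study the reduced system $\sum_j(k_{ij}\rho_j - k_{ji}\rho_i)=0$ for $i=1,\ldots,N$, which now holds for all species uniformly (the $i>N_l$ case is already of this shape, and the $i\le N_l$ case reduces to it after integrating in $v$).

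Next I would fix $(x,t)$ and treat $\rho=(\rho_1,\ldots,\rho_N)$ as an unknown vector in the kernel of the graph-Laplacian-type matrix $\mathcal L$ defined by $(\mathcal L\rho)_i = \sum_j(k_{ij}\rho_j - k_{ji}\rho_i)$. The key step is the classical fact from Markov chain / reaction network theory: for a weakly reversible, connected network the matrix $\mathcal L$ (which is the generator of an irreducible continuous-time Markov chain on $N$ states, up to transpose conventions) has a one-dimensional kernel spanned by a strictly positive vector. I would establish this by the standard argument: $\mathcal L^{\mathsf T}$ has zero column sums, so $\mathbf 1 \in \Ker(\mathcal L^{\mathsf T})$, hence $\dim\Ker\mathcal L\ge 1$; irreducibility (equivalently, strong connectedness of the directed graph, which is exactly Assumption A1 since weak reversibility plus connectedness forces strong connectedness) together with Perron--Frobenius applied to a suitable nonnegative matrix (e.g. $\mathcal L + c\,\Id$ for large $c$) gives a simple dominant eigenvalue with a strictly positive eigenvector, and pins $\dim\Ker\mathcal L = 1$. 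Normalizing by $\sum_i\eta_i=1$ singles out the unique positive solution $(\eta_1,\ldots,\eta_N)$ of \eqref{eq:eta}. Therefore $\rho(x,t)$ must be a scalar multiple of $(\eta_1,\ldots,\eta_N)$; calling that scalar $\rho(x,t)$ (consistent with $\sum_i\eta_i=1$ meaning $\rho = \sum_i\rho_i$) yields $F_i(v)=\eta_i M_i(v)$ as claimed, with $\rho(x,t)\ge 0$ arbitrary.

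The main obstacle is the linear-algebra core: proving that weak reversibility plus connectedness implies the kernel of $\mathcal L$ is exactly one-dimensional and spanned by a positive vector. One clean way to see positivity without invoking Perron--Frobenius machinery directly is a maximum-principle argument on $\Ker\mathcal L$: if $\mathcal L\rho=0$ and $\rho_i$ attains its minimum at index $i_0$, then expanding $0=(\mathcal L\rho)_{i_0}=\sum_j k_{i_0 j}(\rho_j-\rho_{i_0})$ forces $\rho_j=\rho_{i_0}$ for every $j$ with $k_{i_0 j}>0$, i.e. along every edge out of $i_0$; iterating along paths and using strong connectedness propagates equality to all nodes, so $\rho$ is constant — but a constant vector is in $\Ker\mathcal L$ only if $\sum_j(k_{ij}-k_{ji})=0$ for all $i$, which need not hold, so one must instead run this argument on $\Ker\mathcal L^{\mathsf T}$ (where constants do lie) to get its dimension, then deduce $\dim\Ker\mathcal L=1$ by rank-nullity, and finally extract the positive generator of $\Ker\mathcal L$ from irreducibility. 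Everything else in the proof is bookkeeping: the passage from $f_i$ to $\rho_i M_i$ in velocity space, and the observation that the positivity of all $\eta_i$ plus the normalization give uniqueness.
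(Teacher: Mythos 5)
Your proposal follows the same route as the paper's own proof: first use Assumption A1 to force $f_i=\rho_i M_i$ for the light species (the heavy ones being automatic), reducing the local equilibrium condition to the linear system for $(\rho_1,\dots,\rho_N)$, and then invoke the Perron--Frobenius / irreducible-generator fact that connectedness and weak reversibility give a one-dimensional kernel spanned by a strictly positive vector. The paper simply cites that last fact to the reaction-network literature, whereas you sketch a proof of it (maximum principle on $\Ker \mathcal{L}^{\mathsf{T}}$ plus rank--nullity plus irreducibility for positivity); apart from a harmless transposition slip in your second paragraph, which your third paragraph implicitly corrects, the argument is sound.
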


\begin{proof}
A first consequence of Assumption A1 is that for each $i\in \{1,\ldots,N_l\}$ there exists at least one $k_{ji}>0$ and at least one 
$k_{ij}>0$. This implies that for a local equilibrium $f_i(x,v,t) = \rho_i(x,t)M_i(v)$, $i=1,\ldots,N_l$, must hold. Therefore
$$
  \sumj \big(\ki \rho_j - \kj \rho_i\big) =0\,,\quad i=1,\ldots,N \,.
$$
Now it is a standard result of reaction network theory (see, e.g., \cite{Fellner,Horn}), in our simple case of first order reactions
related to the Perron-Frobenius theorem, that the connectedness and weak reversibility imply 
that there is a one-dimensional solution space spanned by $(\eta_1,\ldots,\eta_N)$, where all components have the same sign.
In the language of reaction network theory these are \emph{complex balanced equilibria}.
\end{proof}

A \emph{global equilibrium} is a local equilibrium, which is also a steady state solution of \eqref{eq:k-r}, \eqref{eq:r}
compatible with conservation of total mass. Since at least one equation has a transport term, the function $\rho$
from Lemma \ref{lem:NL} has to be constant for a global equilibrium. In the case $\Omega=\R^d$ we expect dispersion 
and consequential decay to zero. Therefore a nontrivial global equilibrium is only defined for $\Omega=\mathbb{T}^d$ by 
\begin{equation}\label{def:f_infty}
   f_\infty(x,v) := \rho_\infty F(v) \,, \qquad\text{with } \rho_\infty := \textbf{M}/L^d \,.
\end{equation}

\subsection*{Microscopic coercivity -- convergence to equilibrium}
We write the system \eqref{eq:full} in the abstract form
\begin{equation}\label{eq:abstract}
\partial_t f + \sfT f = \sfL f \,,
\end{equation}
with the \emph{transport operator} $\sfT$ and the \emph{reaction operator} $\sfL$ defined as 
\begin{equation}\label{def:TL}
  (\sfT f)_i = \sigma_i\, v\cdot\nabla_x f_i \,, \qquad 
       (\sfL f)_i = \sumj \big(\ki \rho_j M_i - \kj f_i \big) \,,\qquad i=1,\ldots N\,.
\end{equation}
Lemma \ref{lem:NL} characterizes the nullspace of the reaction operator. A projection to this nullspace is given by
\begin{equation}\label{def:Pi}
  \Pi f :=  \rho F \,,\qquad
  \text{with } \rho = \sumj \rho_j \,.
\end{equation}
It is easily seen that $\Pi$ is a projection and that $\langle \Pi f,g\rangle = \langle \Pi f,\Pi g\rangle$, which implies that $\Pi$ is orthogonal.
Since the application of the projection involves integration with respect to $v$ and summation over all species,
we do not only have $\sfL\Pi=0$, but the mass conservation property of the collision operator can be written as 
$\Pi\sfL = 0$.

Considering the quadratic relative entropy with respect to the local equilibrium $F$ suggests the introduction of the
weighted $L^2$-space $\calH$ with the scalar product
\begin{equation}\label{scalar-prod}
  \langle f,g \rangle := \sum_{i=1}^N \int_{\Omega}\int_{\R^d} \frac{f_i g_i}{F_i} dv\,dx \,,
\end{equation}
and with the induced norm $\|\cdot\|$. In the case $\Omega=\mathbb{T}^d$, the members of $\calH$ are periodic with respect to $x$.
Note that, for $i>N_l$, we have $f_i(x,v) = \rho_{f,i}(x)\delta(v)$, 
$g_i(x,v) = \rho_{g,i}(x)\delta(v)$, $F_i(x,v) = \eta_i\delta(v)$, and it has to be understood that 
$$ 
  \int_{\Omega}\int_{\R^d} \frac{f_i g_i}{F_i} dv\,dx = \int_{\Omega}\int_{\R^d}\frac{\rho_{f,i}(x)\rho_{g,i}(x)}{\eta_i} \delta(v) dv\,dx
  = \int_{\Omega} \frac{\rho_{f,i}(x)\rho_{g,i}(x)}{\eta_i} dx \,,\qquad i> N_l\,. 
$$
Our main technical result, which will be 
proved in the following section, is coercivity of the reaction operator with respect to its null space. This property will be called 
\emph{microscopic coercivity.}

\begin{lem}\label{lem:micro}
Let Assumption A1 hold. Then $\Pi$ defined by \eqref{def:Pi} is the orthogonal projection to the nullspace of the reaction operator 
$\sfL: \calH \to \calH$ defined in \eqref{def:TL}. Furthermore there exists a constant $\lambda_m>0$
such that 
$$
   -\langle \sfL f,f \rangle \ge \lambda_m \|(1-\Pi)f\|^2 \,.
$$
\end{lem}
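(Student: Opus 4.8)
The plan is to compute the entropy dissipation $-\langle \sfL f,f\rangle$ explicitly and bound it below by a spectral gap. First I would observe that, since the reaction operator $\sfL$ acts diagonally in $x$ and since for the light species $(\sfL f)_i$ is a multiple of $M_i(v)$ minus a multiple of $f_i$, the orthogonal projection onto $\Ker\sfL$ is exactly $\Pi$ as in \eqref{def:Pi}: one checks $\Pi f = \rho F \in \Ker\sfL$, that $\Pi^2=\Pi$, and that $\langle \Pi f, g\rangle = \langle \Pi f, \Pi g\rangle$ which gives self-adjointness, hence orthogonality. The coercivity inequality is then equivalent, after freezing $x$, to a pointwise-in-$x$ spectral gap for the reaction operator acting on the finite-dimensional-plus-velocity space; integrating the pointwise inequality over $\Omega$ recovers the global statement. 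So the problem reduces to: for fixed $x$, show $-\langle \sfL f,f\rangle_x \ge \lambda_m \|(1-\Pi)f\|_x^2$ where $\langle\cdot,\cdot\rangle_x$ is the fiber inner product.

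The excerpt announces two proofs, so I would present the cleaner one as the main argument: separate relaxation in velocity from relaxation to chemical equilibrium. Write $f_i = \rho_i M_i + g_i$ for $i\le N_l$ with $\int g_i\,dv = 0$, so $g_i$ is the ``velocity-fluctuation'' part orthogonal (in the $M_i^{-1}$-weighted inner product) to the Maxwellian. A direct computation shows
\begin{equation}\label{eq:diss-split}
  -\langle \sfL f,f\rangle = \sum_{i=1}^{N} \Big(\sum_{j=1}^N k_{ij}\Big) \int_\Omega \frac{|g_i|^2}{M_i}\,dv\,dx
    \;+\; \sum_{i=1}^N \int_\Omega \frac{1}{\eta_i}\Big(\sumj(\kj\rho_i - \ki\rho_j)\Big)\rho_i \,dx \,,
\end{equation}
because the cross terms between $g_i$ and the $\rho_j M_i$ pieces integrate to zero over $v$. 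The first sum controls exactly $\|(1-\Pi)f - (\text{chemical part})\|^2$, i.e.\ the velocity fluctuations, with a gap $\min_i \sum_j k_{ij} > 0$ (positive by Assumption A1, which guarantees at least one outgoing reaction from each node). The second sum is the entropy dissipation of the pure reaction ODE system for $(\rho_1,\dots,\rho_N)$, and here I would invoke the known linear reaction-network spectral gap result of \cite{Fellner}: weak reversibility and connectedness give a constant $\lambda_R > 0$ with that quadratic form bounded below by $\lambda_R \sum_i |\rho_i - \eta_i\rho|^2/\eta_i$, which is precisely $\lambda_R$ times the squared norm of the chemical component of $(1-\Pi)f$. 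Combining the two bounds with a weighted triangle/Young inequality (splitting $\|(1-\Pi)f\|^2$ into its velocity-fluctuation part plus its chemical part, which are orthogonal) yields the claim with $\lambda_m$ computable from $\min_i\sum_j k_{ij}$ and $\lambda_R$.

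The second proof, which I would include for completeness since the paper promises it is never worse, avoids citing \cite{Fellner} and instead builds reaction paths directly in species-velocity space: for each pair $(i,j)$ fix the minimal path $(j=i_0,\dots,i_{P_{ij}}=i)$ and telescope the difference $f_i/\eta_i - f_j/\eta_j$ along edges, using that each individual edge term $\ki\rho_j M_i - \kj f_i$ is controlled by the corresponding summand in $-\langle\sfL f,f\rangle$ via an elementary inequality of the type $|a M_i - b M_i|^2/M_i \le C(\text{edge dissipation})$; summing over the path and over all pairs, with constants depending on the path lengths $P_{ij}$ and the rate constants, produces the spectral gap directly. The main obstacle in either route is bookkeeping: in the first proof, verifying carefully that \eqref{eq:diss-split} holds with no leftover cross terms (this uses $\int M_i\,dv = 1$, $\int g_i\,dv = 0$, and the precise structure of $\sfL$) and correctly identifying the orthogonal decomposition of $(1-\Pi)f$; in the second, controlling the accumulation of constants along possibly long paths and making sure the non-self-intersecting choice of paths keeps each edge dissipation from being used too many times. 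I expect the cross-term cancellation and the identification of the two orthogonal pieces of $(1-\Pi)f$ to be the delicate point to get exactly right, everything else being routine.
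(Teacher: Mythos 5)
Your proposal is correct and follows essentially the same route as the paper's first proof: you split both the dissipation and $\|(1-\Pi)f\|^2$ into an orthogonal velocity-fluctuation part and a chemical part, control the former by the total reaction rate at each node and the latter by the reaction-network spectral gap of \cite{Fellner} (which the paper rederives via reaction paths only for comparability with its second, species-velocity-path proof). The only slip is the coefficient of the velocity-fluctuation term in your identity, which should be $\frac{1}{\eta_i}\sum_{j}k_{ji}$ (total outgoing rate, weighted) rather than $\sum_j k_{ij}$; it is still positive under Assumption A1, so the argument is unaffected.
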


This lemma is one of the main tools in the proofs of our results on the long time behavior, presented in Section \ref{sec:hypo}:

\begin{thm}\label{thm:torus}
Let Assumption A1 hold and let $\Omega=\mathbb{T}^d$.  
Then there exist constants $C,\lambda>0$ such that for every $f_I\in\calH$ and with $f_\infty$ given by \eqref{def:f_infty},
the solution $f$ of \eqref{eq:full}, \eqref{eq:IC} satisfies
$$
  \|f(\cdot,\cdot,t) - f_\infty\|^2 \le Ce^{-\lambda t} \|f_I - f_\infty\|^2 \,.
$$
\end{thm}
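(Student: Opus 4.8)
The plan is to apply the abstract $L^2$-hypocoercivity framework of \cite{Schmeiser2,Schmeiser}. Writing $h := f - f_\infty$, one checks that $h$ solves the same equation $\partial_t h + \sfT h = \sfL h$ (since $f_\infty$ is a global equilibrium annihilated by both $\sfT$ and $\sfL$), and that $\Pi h$, $h$ remain mean-zero in the appropriate sense, so $h$ stays in the orthogonal complement of $f_\infty$ inside $\calH$. The strategy is then to verify the four hypotheses of the abstract theorem: (i) \emph{microscopic coercivity}, $-\langle\sfL f,f\rangle \ge \lambda_m\|(1-\Pi)f\|^2$, which is exactly Lemma \ref{lem:micro}; (ii) \emph{macroscopic coercivity}, i.e.\ a Poincar\'e-type inequality $\|\sfT\Pi f\|^2 \ge \lambda_M\|\Pi f\|^2$ on the mean-zero subspace; (iii) \emph{parabolic macroscopic dynamics}, $\Pi\sfT\Pi = 0$; and (iv) boundedness of the auxiliary operators, namely that $\sfA := (1+(\sfT\Pi)^*\sfT\Pi)^{-1}(\sfT\Pi)^*$ satisfies $\|\sfA\sfT(1-\Pi)\|$ and $\|\sfA\sfL\|$ bounded. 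Once these hold, the modified entropy $\mathsf{H}[h] := \tfrac12\|h\|^2 + \delta\,\mathrm{Re}\langle\sfA h,h\rangle$ is, for small $\delta>0$, equivalent to $\tfrac12\|h\|^2$ and decays exponentially, giving the claimed estimate with $C$ depending on $\delta$ and $\lambda$ on $\lambda_m,\lambda_M$ and the operator bounds.

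The concrete work is in checking (ii)--(iv). For (iii): $\Pi\sfT\Pi f = \Pi(\sigma_i v\cdot\nabla_x(\rho F_i))_i = \Pi(\rho_\ell F_i \, v\cdot\nabla_x\rho)_{i}$ style expression, and applying $\Pi$ reintroduces an integration $\int_{\R^d}v\,M_i(v)\,dv = 0$; since each $M_i$ has mean zero this vanishes. For (ii): $\sfT\Pi f$ has components $\sigma_i\eta_i M_i(v)\,v\cdot\nabla_x\rho$, and using $\sum_i \sigma_i\eta_i\int v\otimes v\, M_i\,dv = D\,\Id$ with $D = \sum_{i\le N_l}\eta_i\theta_i > 0$ (nonempty since $N_l\ge1$), one gets $\|\sfT\Pi f\|^2 = D\|\nabla_x\rho\|_{L^2(\mathbb{T}^d)}^2$, so macroscopic coercivity is the Poincar\'e inequality on the torus for the mean-zero function $\rho - \rho_\infty$ with constant $\lambda_M = D\,(2\pi/L)^2$. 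For (iv) one needs that $\sfA$ acts, roughly, as $\nabla_x\lap^{-1}$ composed with a velocity-averaging, hence is bounded and regularizing; combined with the explicit forms of $\sfL$ and $\sfT(1-\Pi)$ the required bounds reduce to finiteness of a few velocity moments of the $M_i$ (guaranteed by the fourth-moment assumption) and elliptic estimates on $\mathbb{T}^d$.

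I expect the main obstacle to be hypothesis (iv), the boundedness of $\sfA\sfT(1-\Pi)$ and $\sfA\sfL$. Although the torus case is the easier of the two geometries (the Fourier transform diagonalizes $-\lap$ and gives a uniform spectral gap, so no loss of decay at low frequencies occurs), one still has to track how $\sfA$ interacts with the non-self-adjoint reaction operator $\sfL$ and with the nonmoving heavy species, for which $F_i = \eta_i\delta(v)$ makes the weighted norm degenerate in $v$; care is needed that all the pairings appearing in $\langle\sfA\sfL h,h\rangle$ are well defined. A secondary but routine point is verifying that $\Pi$ is genuinely the orthogonal projection onto $\Ker\sfL$ as an operator on $\calH$ (already asserted in Lemma \ref{lem:micro}) and that $\sfL$ generates the required dynamics, which follows from standard semigroup theory for \eqref{eq:full} together with the a priori $L^2$-bound coming from $-\langle\sfL f,f\rangle\ge 0$. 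Given Lemma \ref{lem:micro}, the remaining estimates are all finite-dimensional-in-species computations plus Fourier analysis on $\mathbb{T}^d$, so the theorem follows by a direct invocation of the abstract result.
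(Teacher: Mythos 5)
Your proposal follows essentially the same route as the paper: the Dolbeault--Mouhot--Schmeiser modified entropy $\sfH[h]=\tfrac12\|h\|^2+\delta\langle\sfA h,h\rangle$ with the same operator $\sfA$, microscopic coercivity from Lemma \ref{lem:micro}, macroscopic coercivity from the Poincar\'e inequality on $\mathbb{T}^d$, the identity $\Pi\sfT\Pi=0$, and moment/elliptic bounds for $\sfA\sfT(1-\Pi)$ and $\sfA\sfL$, which is exactly how Section \ref{sec:hypo} proceeds. The one point you gesture at but should make explicit is that, since $\sfL$ is not symmetric, the derivative of $\langle\sfA h,h\rangle$ produces the two distinct terms $\langle\sfA\sfL h,h\rangle$ and $\langle\sfA h,\sfL h\rangle$; the paper disposes of the latter by the identity $\langle\sfA h,\sfL h\rangle=\langle\sfA h,\Pi\sfL h\rangle=0$ (using $\sfA=\Pi\sfA$ and $\Pi\sfL=0$), and only the former requires the quantitative bound of Lemma \ref{lem:AL}.
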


\begin{thm}\label{thm:Rd}
Let Assumption A1 hold and let $\Omega=\R^d$.  
Then for every $f_I\in\calH \cap L^1(dv\,dx)$ there exists a constant $C>0$ such that the solution $f$ of \eqref{eq:full}, \eqref{eq:IC} satisfies
$$
  \|f(\cdot,\cdot,t)\|^2 \le C(1+t)^{-d/2}\,.
$$
\end{thm}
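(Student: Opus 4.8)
The plan is to combine the $L^2$-hypocoercivity framework for the torus (Theorem~\ref{thm:torus}) with the whole-space extension of \cite{BDMMS}, using Lemma~\ref{lem:micro} as the microscopic coercivity input. The starting observation is that the equilibrium is now zero, so there is no spectral gap; instead one tracks decay of the $\calH$-norm by a Nash-type inequality argument, exactly as for the heat equation, where the $L^1$-bound controls the low-frequency modes. First I would record that $\|f(\cdot,\cdot,t)\|_{L^1(dv\,dx)} \le \|f_I\|_{L^1(dv\,dx)}$, since the system is mass-conserving and, by the maximum principle for linear kinetic equations with nonnegative reaction rates, preserves nonnegativity of each $f_i$ (and of the $\rho_i$ for $i>N_l$); hence the total mass $\textbf{M}$ is constant and bounds the $L^1$-norm.

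Next I would set up the Fourier decomposition in $x$. Since $\Omega = \R^d$, write $\hat f(\xi,v,t)$ for the Fourier transform in $x$; for each fixed $\xi$ the equation becomes $\pt \hat f + i\,\mathrm{diag}(\sigma_i)\,(\xi\cdot v)\hat f = \sfL\hat f$, an ODE in the fibre Hilbert space with inner product $\sum_i \int |\hat f_i|^2/F_i\,dv$. The modified entropy functional of \cite{Schmeiser2,Schmeiser,BDMMS} is
\begin{equation*}
  \calF_\xi[\hat f] := \tfrac12\|\hat f\|^2 + \delta\,\mathrm{Re}\,\langle \mathsf{A}_\xi \hat f, \hat f\rangle \,,\qquad \mathsf{A}_\xi := \big(1 + (\mathsf{T}_\xi \Pi)^* (\mathsf{T}_\xi \Pi)\big)^{-1} (\mathsf{T}_\xi \Pi)^* \,,
\end{equation*}
with $\mathsf{T}_\xi \hat f = i\,\mathrm{diag}(\sigma_i)(\xi\cdot v)\hat f$, for a small $\delta>0$ independent of $\xi$. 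The four checks of the abstract theorem are: (i) microscopic coercivity $-\mathrm{Re}\langle\sfL\hat f,\hat f\rangle \ge \lambda_m\|(1-\Pi)\hat f\|^2$, which is Lemma~\ref{lem:micro} applied fibrewise; (ii) macroscopic coercivity, i.e.\ a bound $\|\mathsf{T}_\xi \Pi \hat f\|^2 \ge c\,\tfrac{|\xi|^2}{1+|\xi|^2}\|\Pi\hat f\|^2$ — this reduces to showing that $\Pi\mapsto \sum_{i\le N_l}\eta_i\theta_i$-weighted first moment does not vanish, which holds because $N_l\ge 1$ and at least one $\theta_i\eta_i>0$; (iii) boundedness of the auxiliary terms $\mathsf{A}_\xi\mathsf{T}_\xi(1-\Pi)$ and $\mathsf{A}_\xi\sfL$, using finiteness of fourth moments of the $M_i$; and (iv) the "parabolic" scaling that yields $\derivt \calF_\xi \le -\kappa\,\tfrac{|\xi|^2}{1+|\xi|^2}\calF_\xi$, hence $\|\hat f(\xi,\cdot,t)\|^2 \le C e^{-\kappa t|\xi|^2/(1+|\xi|^2)}\|\hat f_I(\xi,\cdot)\|^2$ after absorbing the $\calF_\xi \sim \|\cdot\|^2$ equivalence.

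Given the fibrewise decay estimate, I would conclude by the standard split into low and high frequencies. By Plancherel,
\begin{equation*}
  \|f(t)\|^2 = \int_{\R^d} \|\hat f(\xi,\cdot,t)\|^2\,d\xi \le C\int_{|\xi|\le 1} e^{-\kappa t|\xi|^2/2}\|\hat f_I(\xi,\cdot)\|^2\,d\xi + C\int_{|\xi|>1} e^{-\kappa t/2}\|\hat f_I(\xi,\cdot)\|^2\,d\xi \,.
\end{equation*}
The high-frequency part decays exponentially and is bounded by $Ce^{-\kappa t/2}\|f_I\|^2$. For the low-frequency part, $\|\hat f_I(\xi,\cdot)\|^2 \le \sup_\xi \|\hat f_I(\xi,\cdot)\|^2$; one would like to bound this by $\|f_I\|_{L^1}^2$, but the $L^1$ is taken over both $x$ and $v$ while the fibre norm carries the weight $1/F_i$. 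I would handle this by noting $\|\hat f_I(\xi,\cdot)\|^2 \le \|f_I\|_{L^1(dx; \calH_v)}^2$ where $\calH_v$ is the velocity-fibre norm, and then either assume (harmlessly) $f_I\in L^1(dx;\calH_v)$ or, following \cite{BDMMS}, interpolate: the low-frequency integral $\int_{|\xi|\le 1} e^{-\kappa t|\xi|^2/2}\,d\xi \le C(1+t)^{-d/2}$, so $\|f(t)\|^2 \le C(1+t)^{-d/2}$.

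The main obstacle I anticipate is not in the abstract machinery but in two structural points specific to this model: first, verifying macroscopic coercivity (step (ii)) when some species are \emph{nonmoving} — the operator $\mathsf{T}_\xi$ is degenerate (it annihilates the heavy components entirely), so one must check that the surviving light-species moments still control the full projected mode $\Pi\hat f$, which is true precisely because the reaction network is connected, forcing the equilibrium fractions $\eta_i$ of the light species to be strictly positive; and second, the $\delta$ in the modified entropy and the constant $\kappa$ must be chosen uniformly in $\xi$, which requires the bound in (iii) to be $\xi$-uniform — this is where the finite-fourth-moment hypothesis on the $M_i$ enters, controlling $\|\mathsf{A}_\xi \mathsf{T}_\xi(1-\Pi)\|$ and $\|(1-\Pi)\mathsf{T}_\xi \mathsf{A}_\xi\|$ independently of $\xi$. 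Once these are in hand the decay rate $(1+t)^{-d/2}$ follows mechanically, and matches the heat-equation rate as claimed, consistent with the formal macroscopic limit.
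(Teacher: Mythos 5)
Your strategy is sound and would yield the stated rate, but it is genuinely different from the route the paper takes. You decompose in Fourier modes in $x$ and run the Dolbeault--Mouhot--Schmeiser construction fibrewise with a $\xi$-dependent operator $\sfA_\xi$, obtaining $\|\hat f(\xi,\cdot,t)\|^2\le C e^{-\kappa t|\xi|^2/(1+|\xi|^2)}\|\hat f_I(\xi,\cdot)\|^2$ and then integrating over low and high frequencies; this is the mode-by-mode variant of \cite{BDMMS}. The paper instead keeps the single physical-space Lyapunov functional $\sfH[f]=\frac12\|f\|^2+\delta\sprod{\sfA f}{f}$ (the same one as for the torus, with the same Lemmas \ref{lem:ATPi}--\ref{lem:AL}) and replaces the missing macroscopic coercivity by the Nash inequality applied to $\rho_g$, where $\rho_g-\overline D\Delta_x\rho_g=\rho_f$: since $\int_{\R^d}\rho_g\,dx=\textbf{M}$ and $\rho_g\ge0$, Nash gives $\|\sfT\Pi g\|^2\ge\kappa_M\|g\|^{2+4/d}$, which turns \eqref{dHdtle0} into the nonlinear differential inequality $\frac{d}{dt}\sfH[f]\le-\lambda_\delta\Phi^{-1}\bigl(\tfrac{2}{1+\delta}\sfH[f]\bigr)$, and an ODE comparison then produces the $(1+t)^{-d/2}$ decay. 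Your approach yields sharper mode-by-mode information but relies on translation invariance and on redoing the auxiliary bounds with $\xi$-uniform constants; the paper's approach avoids the Fourier transform entirely and only ever needs $L^1$ control of the macroscopic density, which comes for free from mass conservation and positivity.

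The one substantive gap is the low-frequency control of the initial data, which you correctly identify but do not resolve. Pulling $\sup_\xi\|\hat f_I(\xi,\cdot)\|^2$ out of the low-frequency integral requires $f_I\in L^1\bigl(dx;L^2(F^{-1}dv)\bigr)$, i.e.\ integrability in $x$ with values in the weighted velocity space. This is neither implied by nor comparable to the hypothesis of the theorem, $f_I\in\calH\cap L^1(dv\,dx)$: the unweighted $L^1(dv\,dx)$ norm gives no control of $\sum_i\int|\hat f_{I,i}(\xi,v)|^2F_i(v)^{-1}dv$ because of the inverse-Maxwellian weight at large $|v|$ (and the singular weight for the nonmoving species). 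So, as written, your argument proves the theorem under a strictly different assumption on $f_I$; calling the strengthening ``harmless'' is not justified if the goal is the stated statement. To close the argument under the stated hypothesis you would need the interpolation/splitting in $v$ from \cite{BDMMS}, or you could switch to the paper's Nash-inequality route, which needs only $\|\rho_g\|_{L^1(\Omega)}=\textbf{M}$ and hence exactly $f_I\in L^1(dv\,dx)$ together with the preservation of nonnegativity that you established at the outset. The remaining steps of your outline (microscopic coercivity fibrewise, strict positivity of $\overline D=\sum_{i\le N_l}\eta_i\theta_i$ despite the degeneracy of $\sfT$ on the heavy species, $\xi$-uniformity via fourth moments) are all correct and consistent with the paper.
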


\subsection*{Macroscopic (fast reaction) limit}

We introduce a diffusive macroscopic rescaling $x\to x/\eps$, $t\to t/\eps^2$ with $0<\eps\ll 1$. Note, however, that
in the case $\Omega=\mathbb{T}^d$ we still consider a fixed $\eps$-independent torus after the rescaling.
The abstract form \eqref{eq:abstract}
of our system becomes
\begin{equation}\label{eq:eps}
  \eps^2 \partial_t f_\eps + \eps \sfT f_\eps = \sfL f_\eps \,,
\end{equation}
with a now $\eps$-dependent solution $f_\eps$. Assuming convergence to $f_0$ as $\eps\to 0$, the formal limit of the equation
implies that $f_0$ is a local equilibrium, i.e.~$f_0(x,v,t) = (\Pi f_0)(x,v,t) = \rho_0(x,t)F(v)$. It remains to determine $\rho_0$.
The rescaled microscopic part $R_\eps := \frac{(1-\Pi)f_\eps}{\eps}$ satisfies 
\begin{equation}\label{kinetic/eps}
  \eps\partial_t f_\eps + \sfT f_\eps = \sfL R_\eps
\end{equation}
with the formal limit 
\begin{equation}\label{eq:R0}
   \sfT f_0 = \sfL R_0 \,. 
\end{equation}
Finally, we also need the conservation law
$$
  \partial_t \Pi f_\eps + \Pi\sfT \frac{f_\eps}{\eps} =  \partial_t \Pi f_\eps + \Pi\sfT \frac{\Pi f_\eps}{\eps} + \Pi\sfT R_\eps = 0 \,,
$$
and observe that the diffusive scaling is consistent, since the \emph{diffusive macroscopic limit} identity
\begin{equation}\label{diff-macro}
  \Pi\sfT\Pi = 0 
\end{equation}
holds, which is easily checked, since $\Pi$ maps to a vector of centered Maxwellians, $\sfT$ provides a factor $v$,
and the second application of $\Pi$ involves an integration with respect to $v$. The property \eqref{diff-macro}
will also be essential in the proof of decay to equilibrium in Section \ref{sec:hypo} and
it guarantees the necessary
solvability condition $\Pi\sfT f_0 = 0$ for \eqref{eq:R0}. Substituting its solution into the limiting conservation law should provide
the missing information on $\rho_0$:
\begin{equation}\label{abstr-diff}
  \partial_t \Pi f_0 + \Pi\sfT\widehat{\sfL}^{-1}\sfT f_0 = 0 \,,
\end{equation}
where $\widehat{\sfL}$ denotes the restriction of $\sfL$ to $(1-\Pi)\calH$.

In order to translate the abstract result, we first note that
$$
  (\sfT\Pi f_0)_i = \sigma_i \eta_i M_i \,v\cdot \nabla_x\rho_0 \,,\qquad i=1,\ldots,N \,. 
$$
Since application of $\Pi$ involves integration with respect to $v$ as first step, the identity $\int_{\R^d} vM_i \,dv = 0$ implies 
\eqref{diff-macro}. The solution of \eqref{eq:R0} is then given by
$$
  R_{0,i} = \left(\widehat{\sfL}^{-1}\sfT f_0 \right)_i = - \frac{\eta_i M_i}{K_i} v\cdot\nabla_x\rho_0 \,,\qquad K_i = \sumj k_{ji} \,,
  \qquad i\le N_l \,.
$$ 
A straightforward computation gives
$$
  \Pi\sfT\widehat{\sfL}^{-1}\sfT f_0 = -D\Delta_x \rho_0  \,F \,,\qquad D = \sum_{i=1}^{N_l} \frac{\eta_i \theta_i}{K_i} \,.
$$
Thus, \eqref{abstr-diff} is equivalent to the diffusion equation
\begin{equation}\label{heat-eq}
  \partial_t \rho_0 = D\Delta_x \rho_0 \,.
\end{equation}
The following result, providing a rigorous justification of the formal asymptotics, will be proved in Section \ref{sec:macro-limit}.
It also relies on the microscopic coercivity result Lemma \ref{lem:micro}.

\begin{thm}\label{thm:macro}
Let Assumption A1 and $f_I\in\calH$ hold (with $\Omega=\R^d$ or $\Omega=\mathbb{T}^d$). Then the solution $f_\eps$ of \eqref{eq:IC}, \eqref{eq:eps} converges, as $\eps\to 0+$,
to $\rho_0 F$ in $L_{loc}^\infty(\R^+;\calH)$ weak *, where $\rho_0\in L^\infty(\R^+;L^2(\Omega))$ is a distributional solution
of \eqref{heat-eq} subject to the initial condition $\rho_0(x,0) = \intv{f_I(x,v)}$.
\end{thm}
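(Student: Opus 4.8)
The plan is to establish the convergence of $f_\eps$ to a local equilibrium $\rho_0 F$ by a combination of uniform a priori estimates, weak compactness, and a passage to the limit in the weak formulation. First I would derive the basic energy estimate for \eqref{eq:eps}: testing against $f_\eps$ in $\calH$ and using that $\sfT$ is skew-adjoint (so $\langle \sfT f_\eps, f_\eps\rangle = 0$) together with the microscopic coercivity Lemma \ref{lem:micro}, one obtains
\begin{equation*}
  \frac{\eps^2}{2}\frac{d}{dt}\|f_\eps\|^2 \le -\lambda_m\|(1-\Pi)f_\eps\|^2 \le 0 \,.
\end{equation*}
This gives $\|f_\eps(t)\| \le \|f_I\|$ for all $t$, hence $f_\eps$ is bounded in $L^\infty(\R^+;\calH)$, and moreover, integrating in time,
\begin{equation*}
  \lambda_m \int_0^\infty \|(1-\Pi)f_\eps\|^2\,dt \le \frac{\eps^2}{2}\|f_I\|^2 \,,
\end{equation*}
so that $R_\eps = (1-\Pi)f_\eps/\eps$ is bounded in $L^2(\R^+;\calH)$. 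From the uniform $\calH$-bound I extract a subsequence with $f_\eps \weakconvstar f_0$ in $L^\infty_{loc}(\R^+;\calH)$ weak *; the bound on $(1-\Pi)f_\eps = \eps R_\eps$ forces $(1-\Pi)f_0 = 0$, i.e. $f_0 = \Pi f_0 = \rho_0 F$ with $\rho_0 \in L^\infty(\R^+;L^2(\Omega))$. I also extract a weak $L^2(\R^+;\calH)$ limit $R_0$ of (a further subsequence of) $R_\eps$.

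Next I would identify the equation for $\rho_0$. Pairing \eqref{kinetic/eps} with a test vector of the form $\phi F$ where $\phi = \phi(x,t)$ is scalar, smooth, compactly supported in time (and periodic or compactly supported in $x$), the reaction term vanishes because $\sfL R_\eps$ is orthogonal to the nullspace of $\sfL$ while $\phi F$ lies in it (using $\Pi\sfL = 0$ and self-adjointness of $\Pi$); what survives, after the $\eps$-weighted time-derivative term is sent to zero, is the relation expressing that $\Pi\sfT f_0 = 0$, which is automatic by \eqref{diff-macro} and serves as the solvability condition for \eqref{eq:R0}. To capture the diffusion, I instead test \eqref{kinetic/eps} against $\psi$ with $\psi_i = \sigma_i\,(\eta_i/K_i)\,v\cdot\nabla_x\phi\, M_i$, a fixed element of $(1-\Pi)\calH$ chosen so that $\sfL\psi = -\sfT(\phi F)$ componentwise for $i\le N_l$ — this is exactly the computation already carried out in the excerpt giving $R_{0,i} = -(\eta_i M_i/K_i)\,v\cdot\nabla_x\rho_0$. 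Combining the two test functions and passing to the limit, the cross terms reproduce $-D\Delta_x\rho_0$ with $D = \sum_{i=1}^{N_l}\eta_i\theta_i/K_i$, and one arrives at the distributional identity
\begin{equation*}
  \int_0^\infty\!\!\int_\Omega \rho_0\,(\partial_t\phi + D\Delta_x\phi)\,dx\,dt + \int_\Omega \rho_I\,\phi(\cdot,0)\,dx = 0 \,,
\end{equation*}
with $\rho_I(x) = \intv{f_I(x,v)}$, which is the weak form of \eqref{heat-eq} with the stated initial datum.

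I expect the main obstacle to be the rigorous handling of the time-derivative term $\eps\partial_t f_\eps$ and the recovery of the initial condition. In the weak formulation this term is $-\eps\int_0^\infty\langle f_\eps,\partial_t\phi F\rangle\,dt - \eps\langle f_I,\phi(\cdot,0)F\rangle$; the first piece is $O(\eps)$ by the uniform $\calH$-bound and thus disappears, but the initial-condition contribution to the \emph{conservation law} (after dividing the whole test identity by the correct power of $\eps$) must be tracked carefully so that $\rho_0(\cdot,0) = \rho_I$ emerges and not merely the equation on $(0,\infty)$. The cleanest route is to work throughout with the conservation law $\partial_t\Pi f_\eps + \Pi\sfT R_\eps = 0$ directly: test it against scalar $\phi$, use $\langle \Pi f_\eps,\phi F\rangle = \int_\Omega \rho_{f_\eps}\phi\,dx$, integrate by parts in $t$ to bring in $\rho_I$, and then insert the weak limit $R_0$ into the flux term $\Pi\sfT R_\eps$, whose limit is $\Pi\sfT R_0 = -D\Delta_x\rho_0 F$ by the explicit formula for $R_0$. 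A minor subsidiary point is that the argument must accommodate both $\Omega=\mathbb{T}^d$ and $\Omega=\R^d$; since no decay or tightness is claimed (only weak-* convergence and a distributional solution), the whole-space case needs no extra compactness beyond the uniform $\calH$-bound, and the torus case is if anything easier. Standard diagonal/subsequence arguments together with uniqueness of the distributional heat solution with given $L^2$ datum then promote convergence of the full family $\eps\to 0+$.
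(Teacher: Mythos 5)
Your proposal follows essentially the same route as the paper: the $\eps$-scaled entropy dissipation identity combined with microscopic coercivity (Lemma \ref{lem:micro}) yields the uniform $L^\infty(\R^+;\calH)$ bound on $f_\eps$ and the $L^2(\R^+;\calH)$ bound on $R_\eps=(1-\Pi)f_\eps/\eps$, weak compactness gives $f_0=\rho_0 F$, and the limit is identified by passing to the limit in \eqref{kinetic/eps} and in the conservation law, with uniqueness of the distributional heat solution upgrading subsequential to full convergence. Your extra care with explicit test functions and the recovery of the initial datum only fleshes out steps the paper leaves implicit, so no substantive difference.
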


\section{Microscopic coercivity (proof of Lemma \ref{lem:micro})}\label{sec:micro}

We shall give two different proofs of the coercivity result. Both are inspired by the proof of the corresponding result in
\cite{Fellner}. The first approach is based on a splitting between the species and velocity spaces, where for the former the result of \cite{Fellner} can be used directly. In the second approach the reaction paths of Assumption A1 are extended to paths in the larger species-velocity 
space. In both cases the coercivity constant $\lambda_m$ can in principle be computed
explicitly. Since the computations are rather based on algorithms than on explicit formulas, a comparison of the results
for both approaches would be difficult.

For the following computations we introduce $U_i := \frac{f_i}{\eta_i M_i}$, $V_i := \frac{g_i}{\eta_i M_i}$ and rewrite the 
reaction operator as
$$
  \sfL f = \sumj M_i \left( k_{ij} \eta_j \int M_j' U_j' dv' - k_{ji}\eta_i U_i\right)  \,,
$$
where the primes indicate evaluation at $v'$. This implies
\begin{eqnarray*}
  \langle \sfL f,g\rangle &=& \sum_{i,j=1}^N k_{ij} \eta_j \int_{\Omega\times\R^d\times\R^d} M_i M_j' U_j' V_i \,dv'\,dv\,dx 
  -\frac{1}{2} \sum_{i,j=1}^N k_{ji} \eta_i \int_{\Omega\times\R^d} M_i U_i V_i \,dv\,dx\\
   && -\frac{1}{2} \sum_{i,j=1}^N k_{ji} \eta_i \int_{\Omega\times\R^d\times\R^d} M_i M_j' U_i V_i \,dv'\,dv\,dx  \,.
\end{eqnarray*}
Now \eqref{eq:eta} is used in the second term on the right hand side and the change of variables $(i,v)\leftrightarrow (j,v')$
in the third:
\begin{eqnarray*}
  \langle \sfL f,g\rangle = -\frac{1}{2} \sum_{i,j=1}^N k_{ij} \eta_j \int_{\Omega\times\R^d\times\R^d} M_i M_j' \left(U_i V_i + U_j' V_j'
  - 2 U_j' V_i\right) dv'\,dv\,dx 
\end{eqnarray*}
This shows that $\sfL$ can only be expected to be symmetric in the case of \emph{detailed balance}, i.e.~when 
$k_{ij}\eta_j = k_{ji}\eta_i$ for all $i,j=1,\ldots,N$. It also shows negative semi-definiteness of $\sfL$:
\begin{equation}\label{Lff}
\langle \sfL f,f\rangle = -\frac{1}{2} \sum_{i,j=1}^N k_{ij}\eta_j \int_{\Omega\times\R^d\times\R^d} M_i M_j' (U_i - U_j')^2\,dv'\,dv\,dx
\le 0 \,.
\end{equation}

\paragraph{First proof -- separation of species and velocity contributions:} 
The strategy is to split the dissipation term into contributions measuring the deviation from Maxwellian velocity distributions
on the one hand, and from reaction equilibria of the position densities on the other hand:
\begin{eqnarray}
  - \langle \sfL f,f\rangle &=& \frac{1}{2} \sum_{i,j=1}^N k_{ij}\eta_j \int_{\Omega\times\R^d\times\R^d} M_i M_j' 
  \left[U_i - \frac{\rho_i}{\eta_i} + \frac{\rho_i}{\eta_i} - \frac{\rho_j}{\eta_j} + \frac{\rho_j}{\eta_j} - U_j'\right]^2\,dv'\,dv\,dx 
  \nonumber\\
  &=& \sumi \left( \sumj \frac{k_{ij} \eta_j^2 + k_{ji} \eta_i^2}{2\eta_i\eta_j}\right) \int_{\Omega\times\R^d} 
    \frac{(f_i - \rho_i M_i)^2}{\eta_i M_i} dv\,dx \nonumber\\
    &&  + \sumij \frac{k_{ij}\eta_j}{2} \int_{\Omega} \left( \frac{\rho_i}{\eta_i} - \frac{\rho_j}{\eta_j}\right)^2 dx \,. \label{split1}
\end{eqnarray}
The norm of the microscopic part of the distribution is split correspondingly:
\begin{eqnarray}
  \|(1-\Pi)f\|^2 &=& \sumi \int_{\Omega\times\R^d} \frac{(f_i - \rho_i M_i + \rho_i M_i - \rho \eta_i M_i)^2}{\eta_i M_i} dv\,dx 
  \nonumber\\
  &=& \sumi \int_{\Omega\times\R^d} \frac{(f_i - \rho_i M_i)^2}{\eta_i M_i} dv\,dx +
  \sumi \int_{\Omega} \frac{(\rho_i - \rho \eta_i)^2}{\eta_i} dx \,.    \label{split2}
\end{eqnarray}
On the one hand the connectedness of the reaction network implies
$$
    \min_{1\le i\le N} \sumj \frac{k_{ij} \eta_j^2 + k_{ji} \eta_i^2}{2\eta_i\eta_j} =: \gamma_1 > 0 \,,
$$
which allows to relate the first terms on the right hand sides of \eqref{split1} and \eqref{split2}.
On the other hand \cite[eqn. (2.15)]{Fellner} could be used for the second terms. For comparability of the 
results of the two proofs we include the derivation of this second inequality. We start with the relation
$$
   \sumi \int_\Omega \frac{(\rho_i - \rho\eta_i)^2}{\eta_i} dx
   = \frac{1}{2}  \sumij \eta_i\eta_j \int_\Omega \left( \frac{\rho_i}{\eta_i} - \frac{\rho_j}{\eta_j}\right)^2 dx \,,
$$
which is easily verified by adding and subtracting $\rho$ in the parenthesis on the right hand side, expanding the
square, and using that $\rho$ is the total density. For each pair $(i,j)$ we use Assumption A1 and choose a path of minimal
length $P_{ij}$ from $j$ to $i$, which implies
\begin{eqnarray*}
   \sumi \int_\Omega \frac{(\rho_i - \rho\eta_i)^2}{\eta_i} dx
   &=& \frac{1}{2}  \sumij \eta_i\eta_j \int_\Omega \left( \sum_{p=1}^N \left( \frac{\rho_{i_p}}{\eta_{i_p}} 
   - \frac{\rho_{i_{p-1}}}{\eta_{i_{p-1}}}\right)\right)^2 dx \\
   &\le& \frac{1}{2}  \sumij \eta_i\eta_j P_{ij} \sum_{p=1}^N \int_\Omega \left(  \frac{\rho_{i_p}}{\eta_{i_p}} 
   - \frac{\rho_{i_{p-1}}}{\eta_{i_{p-1}}}\right)^2 dx \,,
\end{eqnarray*}
by an application of the Cauchy-Schwarz inequality. With the definition
$$
  \mu_{ij} := \min_{1\le p \le P_{ij}} k_{i_p i_{p-1}}\eta_{i_{p-1}} > 0\,,
$$
we have the further estimate
\begin{eqnarray*}
   \sumi \int_\Omega \frac{(\rho_i - \rho\eta_i)^2}{\eta_i} dx
   &\le& \frac{1}{2}  \sumij \frac{\eta_i\eta_j P_{ij}}{\mu_{ij}} \sum_{p=1}^N k_{i_p,i_{p-1}}\eta_{i_{p-1}}
   \int_\Omega \left(  \frac{\rho_{i_p}}{\eta_{i_p}} - \frac{\rho_{i_{p-1}}}{\eta_{i_{p-1}}}\right)^2 dx \\
   &\le& \frac{1}{\gamma_2} \sumij \frac{k_{ij}\eta_j}{2} \int_\Omega \left(  \frac{\rho_i}{\eta_i} - \frac{\rho_j}{\eta_j}\right)^2 dx\,,
\end{eqnarray*}
with 
\begin{equation}\label{gamma2}
    \frac{1}{\gamma_2} := \sumij \frac{\eta_i\eta_j P_{ij}}{\mu_{ij}} \,.
\end{equation}
In the second inequality above we have used that each pair $(i_p,i_{p-1})$ occurs only once in a reaction path of minimal 
length.
This concludes the proof of microscopic coercivity with $\lambda_m = \min\{\gamma_1,\gamma_2\}$.

\paragraph{Second proof -- species-velocity space paths:} Starting from the representation \eqref{Lff} and using that the path from $j$ to $i$ is not self-intersecting we have
\begin{eqnarray*}
 - \langle \sfL f,f\rangle &\ge& \frac{\mu_{ij}}{2} \sum_{p=1}^{P_{ij}} \int_{\Omega\times\R^d\times\R^d} 
   M_{i_p} M_{i_{p-1}}' (U_{i_p} - U_{i_{p-1}}')^2 \,dv'\,dv\,dx \\
  &=& \frac{\mu_{ij}}{2} \sum_{p=1}^{P_{ij}} \int_{\Omega\times\R^{(P_{ij}+1)d}} 
   \prod_{q=0}^{P_{ij}} M_{i_q}(v_q) \,(U_{i_p}(v_p) - U_{i_{p-1}}(v_{p-1}))^2 \,dv_0 \ldots dv_{P_{ij}}\,dx \,.
\end{eqnarray*}
With the Cauchy-Schwarz inequality on $\R^{P_{ij}}$ this can be estimated further by
\begin{eqnarray}
 - \langle \sfL f,f\rangle &\ge& \frac{\mu_{ij}}{2P_{ij}}  \int_{\Omega\times\R^{(P_{ij}+1)d}} 
   \prod_{q=0}^{P_{ij}} M_{i_q}(v_q) \left(\sum_{p=1}^{P_{ij}} \left(U_{i_p}(v_p) - U_{i_{p-1}}(v_{p-1})\right)\right)^2 \,dv_0 
     \ldots dv_{P_{ij}}\,dx \nonumber\\
   &=& \frac{\mu_{ij}}{2P_{ij}}  \int_{\Omega\times\R^{(P_{ij}+1)d}} 
   \prod_{q=0}^{P_{ij}} M_{i_q}(v_q)  \left(U_i(v_{P_{ij}}) - U_j(v_0)\right)^2 \,dv_0 \ldots dv_{P_{ij}}\,dx \nonumber\\
   &=& \frac{\mu_{ij}}{2P_{ij}}  \int_{\Omega\times\R^{2d}} M_i M_j'  \left(U_i - U_j'\right)^2 \,dv'\,dv\,dx \,.\label{micro-est1}
\end{eqnarray}
As indicated at the beginning of this section, the strategy in these estimates was to extend the path $i_0,\ldots,i_{P_{ij}}$
in the species space $\{1,\ldots,N\}$ to all possible paths of the form $(i_0,v_0),\ldots, (i_{P_{ij}},v_{P_{ij}})$ in the species-velocity space
$\{1,\ldots,N\} \times \R^d$.

As the next step we observe that
\begin{eqnarray*}
 && \sum_{i,j=1}^N \int_{\Omega\times\R^{2d}} \eta_i \eta_j M_i M_j' (U_i - U_j')^2 dv'\,dv\,dx \\
 && = \sum_{i,j=1}^N \int_{\Omega\times\R^{2d}} \eta_i \eta_j M_i M_j' \left( (U_i - \rho)^2 + (U_j' - \rho)^2 
  -  2(U_i - \rho)(U_j' - \rho) \right)dv'\,dv\,dx \\
 && = 2\|(1-\Pi)f\|^2 \,.
\end{eqnarray*}
Combining this with \eqref{micro-est1} completes the alternative proof of Lemma \ref{lem:micro} with 
$\lambda_m = \gamma_2$, as defined in \eqref{gamma2}. The result of the second proof is always at least as good
as that of the first.

\section{Hypocoercivity}\label{sec:hypo}

Quantitative results on the decay to equilibrium will be shown by employing the hypocoercivity approach of \cite{Schmeiser}
with modifications introduced in \cite{BDMMS}. 

In the case of the torus, $\Omega=\mathbb{T}^d$, we assume w.l.o.g.~$\rho_\infty = \textbf{M} = 0$, which can always be achieved 
by a redefinition of the solution. Thus, in both cases $\Omega = \mathbb{T}^d$ and $\Omega = \R^d$ we expect $f\to 0$ as 
$t\to\infty$. The functional $f\mapsto \|f\|^2$ can then be understood as relative entropy and a natural candidate for a Lyapunov
function. However, by the obvious antisymmetry of the transport operator $\sfT$,
\begin{equation}\label{entropy-diss}
   \frac{d}{dt} \frac{\|f\|^2}{2} = \sprod{\sfL f}{f} \,,
\end{equation}
which is nonpositive as expected, but vanishes on the set of local equilibria, i.e.~it lacks the definiteness required for a Lyapunov
function.

In \cite{Schmeiser} a Lyapunov function, or modified entropy, $\sfH[f]$ has been proposed, which has the form
$$
  \sfH[f] := \frac{\|f\|^2}{2} + \delta \sprod{\sfA f}{f} \,,\qquad\mbox{with } \sfA := [1 + (\sfT\Pi)^\star (\sfT\Pi)]^{-1} (\sfT\Pi)^\star 
$$
and with a small parameter $\delta>0$ to be determined later. In \cite[Lemma 1]{Schmeiser} it has been shown that the operator 
norm of $\sfA$ is bounded by $\frac{1}{2}$, such that 
\begin{equation}\label{Hequiv}
  \frac{1 - \delta}{2} \|f\|^2 \le \sfH[f] \le \frac{1 + \delta}{2} \|f\|^2 \,,
\end{equation}
and $\sfH[f]$ is equivalent to $\|f\|^2$ for $\delta < 1$.

For solutions $f$ of \eqref{eq:abstract}, its time derivative is given by
\begin{equation}\label{dHdt}
\derivtintero \sfH[f] = \sprod{\sfL f}{f} - \delta \sprod{\sfA\sfT\Pi f}{f} - \delta\sprod{\sfA\sfT(1-\Pi)f}{f} + \delta \sprod{\sfT\sfA f}{f} 
  + \delta \sprod{\sfA\sfL f}{f} + \delta \sprod{\sfA f}{\sfL f} \,.
\end{equation}
From the definition of $\sfA$ it is clear that the property $\sfA = \Pi\sfA$ holds. On the other hand, the conservation of total mass
by the collision operator is equivalent to $\Pi\sfL = 0$, i.e.~$\Pi$ also projects to the nullspace of $\sfL^*$. As a consequence,
the last term above vanishes: 
$$
   \sprod{\sfA f}{\sfL f} = \sprod{\Pi\sfA f}{\sfL f} = \sprod{\sfA f}{\Pi\sfL f} = 0 \,.
$$ 
In \cite{Schmeiser} this property is the consequence of the assumption that $\sfL$ is symmetric, which does not hold here, 
as noted in the previous section. 

The first term on the right hand side of \eqref{dHdt} controls the microscopic part $(1-\Pi)f$ of the distribution function.
The second term is responsible for the macroscopic part:

\begin{lem}\label{lem:ATPi}
With the above notation, 
$$
   \sprod{\sfA\sfT\Pi f}{f} = \|\sfT\Pi g\|^2 + \|(\sfT\Pi)^* \sfT\Pi g\|^2 = \overline D \|\nabla_x \rho_g\|_2^2 + 
   {\overline D}^2\|\Delta_x \rho_g\|_2^2\,,
$$
where $\overline D = \sum_{i\le N_l} \eta_i\theta_i$, $\|\cdot\|_2$ = $\|\cdot\|_{L^2(\Omega)}$, and $g=\rho_g F$ is given by
$$
  g = (1+ (\sfT\Pi)^* \sfT\Pi)^{-1} \Pi f \,,\qquad\mbox{i.e.~}\rho_g\mbox{ solves}\quad \rho_g - \nc\overline D \Delta_x \rho_g = \rho_f\,.
$$ 
\end{lem}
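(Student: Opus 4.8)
The plan is to reduce the identity to pure operator algebra and then to evaluate everything by elementary Gaussian integration. Throughout write $B := \sfT\Pi$. Since $\sfT$ is antisymmetric on $\calH$ (integration by parts in $x$, the weights $F_i$ being independent of $x$) and $\Pi$ is self-adjoint, we have $B^* = -\Pi\sfT$, so that $B^*B = -\Pi\sfT^2\Pi$ is a nonnegative self-adjoint operator which vanishes on $(1-\Pi)\calH$; in particular $\sfA = (1+B^*B)^{-1}B^* = \Pi\sfA$, and $(1+B^*B)^{-1}$ acts as the resolvent of $B^*B|_{\Pi\calH}$ on $\Pi\calH$ and as the identity on $(1-\Pi)\calH$. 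The first step is then the algebraic identity $\sfA B = (1+B^*B)^{-1}B^*B = I - (1+B^*B)^{-1}$ on $\Pi\calH$, together with the observation $Bf = B\Pi f$. Hence, with $g$ as in the statement, i.e. $(1+B^*B)g = \Pi f$, one gets $\sfA\sfT\Pi f = \sfA Bf = \Pi f - g = B^*B g$.

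Next I would compute the scalar product. Since $B^*Bg = -\Pi\sfT^2\Pi g$ lies in the range of $\Pi$ and $\Pi$ is an orthogonal projection, $\sprod{\sfA\sfT\Pi f}{f} = \sprod{B^*Bg}{\Pi f} = \sprod{B^*Bg}{(1+B^*B)g} = \sprod{Bg}{Bg} + \sprod{B^*Bg}{B^*Bg} = \|\sfT\Pi g\|^2 + \|(\sfT\Pi)^*\sfT\Pi g\|^2$, which is the first equality. Moreover $g = (1+B^*B)^{-1}\Pi f$ lies in the range of $\Pi$, so it is a local equilibrium and Lemma~\ref{lem:NL} gives $g = \rho_g F$ for some scalar function $\rho_g$.

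The third step is the explicit evaluation. From $g=\rho_g F$ and $F_i=\eta_i M_i$ one has $(\sfT\Pi g)_i = (\sfT g)_i = \sigma_i\eta_i M_i\,v\cdot\nabla_x\rho_g$; substituting into the norm and using $\int_{\R^d}M_i\,v\otimes v\,dv = \theta_i\,\Id$ together with $\sigma_i = 1$ for $i\le N_l$ and $\sigma_i = 0$ otherwise yields $\|\sfT\Pi g\|^2 = \big(\sum_{i\le N_l}\eta_i\theta_i\big)\|\nabla_x\rho_g\|_2^2 = \overline D\,\|\nabla_x\rho_g\|_2^2$. Applying one further transport step and then $\Pi$, and using the same second moment, $(\sfT\Pi)^*\sfT\Pi g = -\Pi\sfT^2 g = -\overline D\,\Delta_x\rho_g\,F$; since $\sum_{i=1}^N\eta_i = 1$ one has $\|\phi F\|^2 = \|\phi\|_2^2$ for every scalar $\phi$, hence $\|(\sfT\Pi)^*\sfT\Pi g\|^2 = \overline D^2\,\|\Delta_x\rho_g\|_2^2$. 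The identical computation shows $(1+B^*B)g = (\rho_g - \overline D\,\Delta_x\rho_g)F$, so that $(1+B^*B)g = \Pi f = \rho_f F$ is precisely the resolvent equation $\rho_g - \overline D\,\Delta_x\rho_g = \rho_f$.

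The calculations are routine; the point that needs care is the functional-analytic bookkeeping. One must make sure that $g$ belongs to the domain of $(\sfT\Pi)^*\sfT\Pi$, equivalently that $\rho_g\in H^2(\Omega)$, which follows from elliptic regularity for the resolvent problem $\rho_g - \overline D\,\Delta_x\rho_g = \rho_f$ on $\mathbb{T}^d$ or $\R^d$; only then are the manipulations with the unbounded operator $\sfT\Pi$ and with $(1+B^*B)^{-1}$ legitimate on the relevant domains. One also uses that $\sfA$ is bounded with norm $\le \frac12$ (from \cite{Schmeiser}) and that $Bf$ depends on $f$ only through $\Pi f$, which is what allows $Bf = B\Pi f$. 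I expect this bookkeeping, rather than any single inequality, to be the main thing to get right.
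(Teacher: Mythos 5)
Your proposal is correct and follows essentially the same route as the paper's proof: the resolvent identity $\sfA\sfT\Pi f=(1+\mathcal{L})^{-1}\mathcal{L}\Pi f=\Pi f-g=\mathcal{L}g$ with $\mathcal{L}=(\sfT\Pi)^*\sfT\Pi$, followed by $\sprod{\mathcal{L}g}{g+\mathcal{L}g}=\|\sfT\Pi g\|^2+\|\mathcal{L}g\|^2$ and the Gaussian second-moment computation giving $\overline D$. The only difference is that you spell out the explicit evaluation and the domain/regularity bookkeeping, which the paper leaves as ``a straightforward computation.''
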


\begin{proof}
The property $g = \Pi g$ is obvious from its definition.
We use the abbreviation $\mathcal{L} := (\sfT\Pi)^* \sfT\Pi$ and compute
\begin{eqnarray*}
   \sfA\sfT\Pi f = (1+\mathcal{L})^{-1} \mathcal{L} \Pi f = (1+\mathcal{L})^{-1} (1 + \mathcal{L} - 1)\Pi f = \Pi f - g = \mathcal{L}g \,.
\end{eqnarray*}
Therefore, using again the property $\sfA=\Pi\sfA$,
$$
  \sprod{\sfA\sfT\Pi f}{f} = \sprod{\sfA\sfT\Pi f}{\Pi f} = \sprod{\mathcal{L}g}{g + \mathcal{L}g} = \|\sfT\Pi g\|^2 + \|\mathcal{L}g\|^2 \,.
$$
A straightforward computation shows $\mathcal{L}g = -\nc\overline D \Delta_x \rho_g\,F$, completing the proof.
\end{proof}
As a consequence, the first two terms on the right hand side of \eqref{dHdt} provide the desired definiteness, since obviously
$\sprod{\sfA\sfT\Pi f}{f}\ge 0$ and $\sprod{\sfA\sfT\Pi f}{f} = 0\,\Rightarrow\, \rho_g=0\,\Rightarrow\,\Pi f = 0$. However, the remaining terms still need to be controlled.  

We start by using the diffusive macroscopic limit property \eqref{diff-macro}, implying
$$
   \sfT\sfA = -\sfT\Pi(1+\mathcal L)^{-1} \Pi\sfT = -(1-\Pi)\sfT\Pi(1+\mathcal L)^{-1} \Pi\sfT(1-\Pi) \,.
$$
In \cite[Lemma 1]{Schmeiser} it has been shown that the operator norm of $\sfT\sfA$ is bounded by 1 implying, together with 
the above,
\begin{equation}\label{bd-TA}
  |\sprod{\sfT\sfA f}{f}| \le \|(1-\Pi)f\|^2 \,.
\end{equation}

\begin{lem}\label{lem:AT(1-Pi)}
With the above notation,
$$
   |\sprod{\sfA\sfT(1-\Pi)f}{f}| \le C_1 \|(1-\Pi)f\| \sprod{\sfA\sfT\Pi f}{f}^{1/2} \qquad\mbox{with } 
   C_1 = \frac{1}{\overline D} \left( d(d+2)\sum_{i=1}^{N_l} \eta_i \theta_i^2\right)^{1/2}\,.
$$
\end{lem}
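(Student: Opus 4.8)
The plan is to proceed exactly as in the analogous estimate in \cite{Schmeiser}, exploiting the structure $\sfA = \Pi\sfA = (1+\mathcal L)^{-1}(\sfT\Pi)^\star$ with $\mathcal L = (\sfT\Pi)^\star\sfT\Pi$, and the notation $g = \rho_g F$ of Lemma \ref{lem:ATPi}. First I would write, using self-adjointness of $(1+\mathcal L)^{-1}$ and the identity $\sfA\sfT\Pi f = \mathcal L g$ from the proof of Lemma \ref{lem:ATPi},
\begin{equation*}
  \sprod{\sfA\sfT(1-\Pi)f}{f} = \sprod{(1+\mathcal L)^{-1}(\sfT\Pi)^\star\sfT(1-\Pi)f}{\Pi f}
   = \sprod{\sfT(1-\Pi)f}{\sfT\Pi (1+\mathcal L)^{-1}\Pi f} = \sprod{\sfT(1-\Pi)f}{\sfT\Pi g}\,.
\end{equation*}
Then by Cauchy--Schwarz in $\calH$ this is bounded by $\|\sfT(1-\Pi)f\|\,\|\sfT\Pi g\|$, and since $\|\sfT\Pi g\|^2 \le \sprod{\sfA\sfT\Pi f}{f}$ by Lemma \ref{lem:ATPi}, it remains to bound $\|\sfT(1-\Pi)f\|$ by a constant times $\|(1-\Pi)f\|$.

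The second step is therefore the computation of that operator norm. Writing $h = (1-\Pi)f$ with components $h_i$, one has $(\sfT h)_i = \sigma_i\, v\cdot\nabla_x h_i$, and hence
\begin{equation*}
  \|\sfT h\|^2 = \sum_{i=1}^{N_l} \int_{\Omega\times\R^d} \frac{|v\cdot\nabla_x h_i|^2}{\eta_i M_i}\,dv\,dx\,.
\end{equation*}
The point is that $\sfT$ is \emph{not} bounded on all of $\calH$, so this has to be estimated against $\|h\|^2 = \sum_i \int h_i^2/(\eta_i M_i)$ by using that $h_i/M_i$ is, after the change of variables implicit in the weighted norm, a function whose $v$-dependence is controlled: more precisely one expands $h_i = \rho_{h,i} M_i + (h_i - \rho_{h,i}M_i)$ — but since $h = (1-\Pi)f$ the relevant structure is that the Gaussian weight $M_i$ gives $\int |v|^2 M_i\,dv = d\theta_i$ and $\int |v|^4 M_i$-type moments produce the factor $d(d+2)\theta_i^2$. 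The cleanest route is to note $v\cdot\nabla_x h_i = v\cdot\nabla_x(h_i/M_i)\,M_i$ only when $h_i$ is a multiple of $M_i$; in general one should instead use that for the quadratic form one may bound, componentwise and pointwise in $x$, $\int \frac{|v\cdot w|^2 \phi^2}{M_i}dv \le C_i \int \frac{\phi^2}{M_i}dv$ is \emph{false} for general $\phi$, so the genuine content is that this estimate holds when $\phi = h_i$ has the special form forced by being in the range relevant here — namely one uses the expansion of $h_i$ into its Maxwellian part and its orthogonal complement and the moment bounds on $M_i$ to extract $\sum_i \eta_i\theta_i^2$ with the combinatorial constant $d(d+2)$ from the fourth moments. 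Assembling: $\|\sfT(1-\Pi)f\|^2 \le d(d+2)\big(\sum_{i=1}^{N_l}\eta_i\theta_i^2\big)\|(1-\Pi)f\|^2$, and dividing by $\overline D^2$ after the Cauchy--Schwarz step gives exactly $C_1$.

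The main obstacle is the second step: making the bound on $\|\sfT(1-\Pi)f\|$ rigorous requires care because $\sfT$ is unbounded on $\calH$, so one genuinely needs to use the projection $1-\Pi$ — equivalently, the fact that $\sfT\Pi$ (not $\sfT$ alone) is what appears, via the adjoint, inside $\sfA$. Concretely I expect the correct argument to insert $\sfT(1-\Pi)f = \sfT f - \sfT\Pi f$ is \emph{not} the way; rather, since $\sfA\sfT(1-\Pi)f$ is paired against $\Pi f$ and $\sfA = \Pi\sfA$, the pairing only sees $(\sfT\Pi)$ applied to $g$, so the unbounded operator $\sfT$ acts on the smooth object $\Pi g = \rho_g F$ on one side and on $(1-\Pi)f$ on the other — and the $(1-\Pi)f$ side is controlled because the $v\cdot\nabla_x$ derivative lands on $\rho_g F$ after integrating by parts in $x$ (legitimate on $\mathbb{T}^d$ and on $\R^d$ for $\calH$-functions via density), transferring the transport derivative onto $g$ whose velocity weight is Gaussian and whose moments are finite. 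This integration by parts plus the Gaussian moment computation $\int |v|^2 M_i \,dv = d\theta_i$, $\int v_k^2 v_l^2 M_i\,dv$-type identities yielding $d(d+2)\theta_i^2$, together with $\overline D = \sum_{i\le N_l}\eta_i\theta_i$ from Lemma \ref{lem:ATPi}, produce the stated constant $C_1 = \overline D^{-1}\big(d(d+2)\sum_{i=1}^{N_l}\eta_i\theta_i^2\big)^{1/2}$.
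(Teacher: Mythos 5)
Your opening identity $\sprod{\sfA\sfT(1-\Pi)f}{f} = \sprod{\sfT(1-\Pi)f}{\sfT\Pi g}$ is correct and agrees with the paper's starting point, but the step you build the estimate on --- Cauchy--Schwarz in the form $\|\sfT(1-\Pi)f\|\,\|\sfT\Pi g\|$ followed by the bound $\|\sfT(1-\Pi)f\|^2 \le d(d+2)\bigl(\sum_{i}\eta_i\theta_i^2\bigr)\|(1-\Pi)f\|^2$ --- is not salvageable. The operator $\sfT$ is unbounded on $(1-\Pi)\calH$ for two independent reasons: the components of $(1-\Pi)f$ need not be differentiable in $x$ (take $f_i=\eta_i M_i a(x)$ with the $a$'s arranged so that $\rho_f=0$ and rapidly oscillating in $x$; then $\|\sfT(1-\Pi)f\|/\|(1-\Pi)f\|$ is arbitrarily large), and multiplication by $v$ is unbounded in the $M_i^{-1}$-weighted $L^2(dv)$ norm because $(1-\Pi)f$ has no prescribed velocity profile --- the condition $\Pi f=0$ is a single scalar constraint per $x$ and does not force $f_i$ to be a multiple of $M_i$. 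You in fact notice both obstructions midway through, but the ``assembling'' line reinstates the false bound anyway.

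The fix is the one you gesture at in your last paragraph but do not execute: apply the antisymmetry of $\sfT$ once more to move the remaining transport derivative onto $g$, obtaining $\sprod{\sfA\sfT(1-\Pi)f}{f} = -\sprod{(1-\Pi)f}{\sfT^2 g}$ with $(\sfT^2 g)_i = \sigma_i\, v\cdot\nabla_x(v\cdot\nabla_x\rho_g)F_i$; this is exactly the paper's proof. Cauchy--Schwarz then gives $\|(1-\Pi)f\|\,\|\sfT^2 g\|$, which is finite because both the $x$-derivatives and the $v$-weights now land on the smooth local equilibrium $g=\rho_g F$: the fourth Gaussian moments yield $\|\sfT^2 g\|^2 \le d(d+2)\sum_{i\le N_l}\eta_i\theta_i^2\,\|\Delta_x\rho_g\|_2^2$. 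Note also that the ingredient of Lemma \ref{lem:ATPi} you then need is the \emph{second} term, $\|\mathcal{L}g\|^2 = \overline D^2\|\Delta_x\rho_g\|_2^2 \le \sprod{\sfA\sfT\Pi f}{f}$, not $\|\sfT\Pi g\|^2$ (which only controls $\|\nabla_x\rho_g\|_2$); this is where the single factor $\overline D^{-1}$ in $C_1$ comes from, whereas your bookkeeping (``dividing by $\overline D^2$'') would give the wrong power.
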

\begin{proof}
With $g$ as introduced in Lemma \ref{lem:ATPi} and with $\sfA^* = \sfT\Pi(1+\mathcal{L})^{-1} 
= \sfT(1+\mathcal{L})^{-1}\Pi$ we have
$$
   \sprod{\sfA\sfT(1-\Pi)f}{f} = - \sprod{(1-\Pi)f}{\sfT\sfA^*f} = -\sprod{(1-\Pi)f}{\sfT^2 g}
$$
and
$$
  (\sfT^2 g)_i = \sigma_i \,v\cdot\nabla_x (v\cdot\nabla_x \rho_g)F_i  \,.
$$
The estimate
$$
  \|\sfT^2 g\|^2 \le \sum_{i=1}^{N_l} \intv{|v|^4 F_i} \|\nabla_x^2 \rho_g\|_2^2 
  = d(d+2)\sum_{i=1}^{N_l} \eta_i \theta_i^2 \,\|\Delta_x \rho_g\|_2^2
$$
and an application of Lemma \ref{lem:ATPi} complete the proof.
\end{proof}

\begin{lem}\label{lem:AL}
With the above notation,
$$
   |\sprod{\sfA\sfL f}{f}| \le C_2 \|(1-\Pi)f\| \sprod{\sfA\sfT\Pi f}{f}^{1/2} \qquad\mbox{with } 
   C_2 = \left( 2N \max_{1\le j\le N} \sumi \frac{k_{ij}^2}{\eta_i} + 2 \max_{1\le i\le N} \left(\sumj k_{ji} \right)^2\right)^{1/2}\,.
$$
\end{lem}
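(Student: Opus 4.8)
The plan is to exploit the factorization $\sfA = \Pi\sfA$ exactly as in Lemma~\ref{lem:AT(1-Pi)}, so that the term $\sprod{\sfA\sfL f}{f}$ gets transported onto $g$ and onto $(1-\Pi)f$. Writing $\sfA^* = \sfT(1+\mathcal L)^{-1}\Pi = \sfT\Pi(1+\mathcal L)^{-1}$ and using $g = (1+\mathcal L)^{-1}\Pi f$, I would start from
$$
  \sprod{\sfA\sfL f}{f} = \sprod{\sfL f}{\sfA^* f} = \sprod{\sfL f}{\sfT g}\,,
$$
and then observe that $(\sfT g)_i = \sigma_i\,\eta_i M_i\,v\cdot\nabla_x\rho_g$ is a smooth, explicit function of $v$. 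So the task reduces to estimating $\sprod{\sfL f}{\sfT g}$, where $\sfT g$ is a vector of centered ''drift Maxwellians''. Since $\Pi\sfL = 0$ (mass conservation), I can replace $f$ by $(1-\Pi)f$ in $\sfL f$: $\sfL f = \sfL(1-\Pi)f$ because $\sfL\Pi = 0$ as well, so in fact $\sprod{\sfL f}{\sfT g} = \sprod{\sfL(1-\Pi)f}{\sfT g}$. This is the structural identity that will eventually produce the factor $\|(1-\Pi)f\|$ on the right-hand side.

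Next I would bound $\|\sfL f\|$ — equivalently $\|\sfL(1-\Pi)f\|$ — by a constant times $\|(1-\Pi)f\|$. From the definition $(\sfL f)_i = \sumj(\ki\rho_j M_i - \kj f_i)$, a direct computation of $\sum_i\int \frac{(\sfL f)_i^2}{\eta_i M_i}\,dv\,dx$ splits into the ''gain'' part $\sum_i \frac{M_i}{\eta_i}(\sumj k_{ij}\rho_j)^2$ and the ''loss'' part $\sum_i \frac{K_i^2 f_i^2}{\eta_i M_i}$ with $K_i = \sumj\kj$, plus a cross term that can be absorbed; using $2ab\le a^2+b^2$ and Cauchy--Schwarz over $j$ in the gain term, together with the bound $\rho_i^2 \le \int f_i^2/(\eta_i M_i)\,dv \cdot \eta_i$ (Cauchy--Schwarz in $v$ against $M_i$), one reaches precisely
$$
  \|\sfL f\|^2 \le \Big(2N\max_{j}\sumi\frac{k_{ij}^2}{\eta_i} + 2\max_i K_i^2\Big)\,\|(1-\Pi)f\|^2 = C_2^2\,\|(1-\Pi)f\|^2\,.
$$
Then Cauchy--Schwarz gives $|\sprod{\sfL f}{\sfT g}| \le \|\sfL f\|\,\|\sfT g\| \le C_2\|(1-\Pi)f\|\,\|\sfT g\|$, and I identify $\|\sfT g\|^2 = \sum_{i\le N_l}\int |v|^2\eta_i^2 M_i^2/(\eta_i M_i)\,dv\;\|\nabla_x\rho_g\|_2^2 = \sum_{i\le N_l}\eta_i\theta_i\,\|\nabla_x\rho_g\|_2^2 = \sprod{\sfA\sfT\Pi f}{f}$ up to the lower-order $\mathcal L g$ term — more precisely $\|\sfT g\|^2 = \|\sfT\Pi g\|^2 \le \sprod{\sfA\sfT\Pi f}{f}$, so $\|\sfT g\| \le \sprod{\sfA\sfT\Pi f}{f}^{1/2}$, which delivers the claimed inequality with constant $C_2$.

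The main obstacle is bookkeeping rather than conceptual: getting the constant $C_2$ to come out exactly as stated requires being careful about which Cauchy--Schwarz is applied where (the factor $N$ comes from summing $k_{ij}^2/\eta_i$ over the $N$ reaction partners, the two factors of $2$ from the two applications of Young's inequality separating gain from loss), and one must check that the contribution of the $\eta_i M_i'$-weighted cross term in $\|\sfL f\|^2$ does not spoil the bound. One also has to be slightly careful that $\sfT g$ only has nonzero components for $i\le N_l$, which is automatic since $\sigma_i = 0$ for the heavy species, and that the identity $\|\sfT\Pi g\|^2 \le \sprod{\sfA\sfT\Pi f}{f}$ from Lemma~\ref{lem:ATPi} is being used in the correct direction. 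Modulo these routine verifications the argument is a straightforward chain of Cauchy--Schwarz estimates.
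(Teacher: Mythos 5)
Your proposal follows the paper's proof essentially step for step: the identity $\sprod{\sfA\sfL f}{f}=\sprod{\sfL(1-\Pi)f}{\sfT\Pi g}$ via $\sfA^*f=\sfT\Pi g$ and $\sfL=\sfL(1-\Pi)$, the operator bound $\|\sfL h\|^2\le C_2^2\|h\|^2$ obtained by splitting gain and loss with Young's inequality, Cauchy--Schwarz over $j$ and in $v$ for the gain term, and finally $\|\sfT\Pi g\|^2\le\sprod{\sfA\sfT\Pi f}{f}$ from Lemma \ref{lem:ATPi}. The argument is correct and the constant comes out as stated.
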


\begin{proof}
We use (remembering $\sfL = \sfL(1-\Pi)$ and, from the preceding proof, $\sfA^* f = \sfT\Pi g$)
$$
  \sprod{\sfA\sfL f}{f} = \sprod{\sfL(1-\Pi)f}{\sfT\Pi g} \,,
$$
Lemma \ref{lem:ATPi}, and the boundedness of $\sfL$:
\begin{eqnarray*}
  \|\sfL f\|^2 &\le& 2 \sumi \intv{\int_\Omega \frac{M_i}{\eta_i} \left( \sumj k_{ij}\rho_j\right)^2 dx} 
  + 2 \sumi \intv{\int_\Omega \frac{f_i^2}{F_i} \left( \sumj k_{ji}\right)^2 dx} \\
  &\le& 2N \sumj \sumi \frac{k_{ij}^2}{\eta_i} \int_\Omega \rho_j^2 dx + 2 \max_{1\le i\le N} \left(\sumj k_{ji} \right)^2 \|f\|^2 \\
  &\le& \left( 2N \max_{1\le j\le N} \sumi \frac{k_{ij}^2}{\eta_i} + 2 \max_{1\le i\le N} \left(\sumj k_{ji} \right)^2\right) \|f\|^2 \,.
\end{eqnarray*}
\end{proof}

Collecting the results of Lemmas \ref{lem:micro}, \ref{lem:ATPi}, \ref{lem:AT(1-Pi)}, and \ref{lem:AL}, we obtain
$$
   \frac{d}{dt}\sfH[f] \le -(\lambda_m - \delta)\|(1-\Pi)f\|^2 - \delta \sprod{\sfA\sfT\Pi f}{f} 
   + \delta(C_1 + C_2)\|(1-\Pi)f\|\sprod{\sfA\sfT\Pi f}{f}^{1/2} \,.
$$
Thus, for 
$$
  \delta < \frac{4\lambda_m}{4+(C_1+C_2)^2} \,,
$$
we have 
\begin{equation}\label{dHdtle0}
   \frac{d}{dt}\sfH[f] \le - \lambda_\delta (\|(1-\Pi)f\|^2 + \sprod{\sfA\sfT\Pi f}{f}) 
\end{equation}
with
$$
   \lambda_\delta = \frac{1}{2} \left( \lambda_m - \sqrt{\lambda_m^2 - \delta( 4\lambda_m - 4\delta - \delta(C_1 + C_2)^2)}\right) >0 \,.
$$
This shows that $\sfH[f]$ is a Lyapunov function. It remains to obtain the decay rates.

\subsection*{Exponential decay on the torus (proof of Theorem \ref{thm:torus})}

For the case of the torus, i.e.~$\Omega = \mathbb{T}^d$, recalling the normalization to $\rho_\infty=0$ from the beginning of this section, with the notation of Lemma \ref{lem:ATPi} we have
$$
   \int_{\mathbb{T}^d} \rho_g \,dx = \int_{\mathbb{T}^d} \rho_f \,dx = 0 \,.
$$
The Poincar\'e inequality on the torus therefore provides \emph{macroscopic coercivity}, i.e.~there exists $\lambda_M>0$ such that
$$
  \|\sfT\Pi g\|^2 = \overline D \|\nabla_x \rho_g\|_2^2 \ge \lambda_M \|\rho_g\|_2^2 = \lambda_M \|g\|^2 \,.
$$
This is used in
\begin{eqnarray*}
  \|\Pi f\|^2 &=& \|g + \mathcal{L}g\|^2 = \|g\|^2 + 2\|\sfT\Pi g\|^2 + \|\mathcal{L}g\|^2 
    \le \left( \frac{1}{\lambda_M} + 2\right)\|\sfT\Pi g\|^2 + \|\mathcal{L}g\|^2 \\
    &\le& \left( \frac{1}{\lambda_M} + 2\right) \sprod{\sfA\sfT\Pi f}{f} \,.
\end{eqnarray*}
Combining this estimate with \eqref{Hequiv} and with \eqref{dHdtle0} gives
$$
  \frac{d}{dt} \sfH[f] \le - \frac{2\lambda_\delta \lambda_M}{(1+2\lambda_M)(1+\delta)} \sfH[f] \,,
$$
completing the proof of Theorem \ref{thm:torus} with 
$$
    \lambda = \frac{2\lambda_\delta \lambda_M}{(1+2\lambda_M)(1+\delta)} \,,\qquad C = \frac{1+\delta}{1-\delta} \,,
    \qquad \delta < \min\left\{ 1, \frac{4\lambda_m}{4+(C_1+C_2)^2}\right\} \,.
$$

\subsection*{Algebraic decay on the whole space (proof of Theorem \ref{thm:Rd})}

In the case $\Omega=\R^d$ macroscopic coercivity fails and is replaced by the Nash inequality \cite{Nash}
$$
   \|u\|_2^{2+4/d} \le \mathcal{C} \|\nabla u\|_2^2 \|u\|_1^{4/d} \qquad \forall\, u \in H^1(\R^d)\cap L^1(\R^d) \,.
$$
Noting that 
$$
    \int_{\R^d} \rho_g \,dx = \int_{\R^d} \rho_f \,dx = \textbf{M} \,,
$$
it implies
$$
  \|\sfT\Pi g\|^2 = \overline D \|\nabla_x \rho_g\|_2^2 \ge \kappa_M \|g\|^{2+4/d} \,,\qquad\mbox{with } 
  \kappa_M = \frac{\overline D}{\mathcal{C}\textbf{M}^{4/d}}  \,,
$$
and, thus,
\begin{eqnarray*}
  \|\Pi f\|^2 &\le& \kappa_M^{-\frac{d}{d+2}} \|\sfT\Pi g\|^{\frac{2d}{d+2}} + 2\|\sfT\Pi g\|^2 + \|\mathcal{L}g\|^2 \\
  &\le& \kappa_M^{-\frac{d}{d+2}}\sprod{\sfA\sfT\Pi f}{f}^{\frac{d}{d+2}} + 2 \sprod{\sfA\sfT\Pi f}{f} =: \Phi(\sprod{\sfA\sfT\Pi f}{f}) \,.
\end{eqnarray*}
Furthermore, by the properties of $\Phi$,
$$
   \|f\|^2 \le \Phi\left( \|(1-\Pi)f\|^2 + \sprod{\sfA\sfT\Pi f}{f} \right)
$$
Similarly to the case of the torus we now obtain from \eqref{dHdtle0} the differential inequality
$$
  \frac{d}{dt} \sfH[f] \le -\lambda_\delta \Phi^{-1}\left( \frac{2}{1+\delta} \sfH[f]\right) \,.
$$
The decay of $\sfH[f]$ can be estimated by the solution $z$ of the corresponding ODE problem
\[
  \frac{dz}{dt} =  - \lambda_\delta \Phi^{-1}\left( \frac{2z}{1+\delta} \right) \,,\qquad z(0) = \sfH[f_I] \,.
\]
By the properties of $\Phi$ it is obvious that $z(t)\to 0$ monotonically as $t\to\infty$, which implies that the same is true
for $\frac{dz}{dt}$. Therefore, there exists $t_0>0$ such that, in the rewritten ODE
\[
 \left(-\frac{2}{\lambda_\delta\kappa_M} \frac{dz}{dt}\right)^{\frac{d}{d+2}} -\frac{2}{\lambda_\delta} \frac{dz}{dt} 
 = \frac{2z}{1+\delta} \,,
\]
the second term is smaller than the first for $t\ge t_0$, implying the differential inequality
\[
   \frac{dz}{dt} \le -\kappa z^{1+2/d} \qquad\mbox{for } t\ge t_0\,,
\]
with an appropriately defined positive constant $\kappa$. Integration  gives
\[
 z(t) \le \left(\sfH[f_I]^{-2/d} + \frac{2\kappa}{d} t\right)^{-d/2} \,,
\]
completing the proof of Theorem \ref{thm:Rd}.

\section{The rigorous macroscopic limit (proof of Theorem \ref{thm:macro})}\label{sec:macro-limit}

The goal of this section is to justify the formal macroscopic limit $\eps\to 0$, carried out in Section \ref{sec:results} on the 
scaled equation \eqref{eq:eps}. The entropy dissipation relation \eqref{entropy-diss} now reads
\begin{equation}
\label{eq: test fepsilon}
\frac{\eps^2}{2}\derivtintero \|f_\eps\|^2  = \sprod{\sfL f_\eps}{f_\eps} \,.
\end{equation} 
Integration with respect to time and microscopic coercivity (Lemma \ref{lem:micro}) give
\begin{equation*}
\frac{\eps^2}{2}\|f_\eps(\cdot,\cdot,t)\|^2 + \lambda_m \int_0^t \|(1-\Pi)f_\eps(\cdot,\cdot,s)\|^2 ds \le \frac{\eps^2}{2} \|f_I\|^2 \,.
\end{equation*} 
From this relation we deduce that $f_\eps$ is bounded in $L^\infty(\R^+;\calH)$ and that $R_\eps = \frac{1}{\eps}(1-\Pi)f_\eps$ is
bounded in $L^2(\R^+;\calH)$, both uniformly as $\eps\to 0$. As a consequence of the former, $\rho_\eps = \intv{f_\eps}$ is
uniformly bounded in $L^\infty(\R^+;L^2(\R^d))$. Therefore there exist weak accumulation points $f_0$, $R_0$, and $\rho_0$ of, 
respectively, $f_\eps$, $R_\eps$, and $\rho_\eps$, satisfying $f_0(x,v,t) = \rho_0(x,t)F(v)$. These facts allow to pass to the limit in \eqref{kinetic/eps} and in the rescaled conservation law \eqref{mass-cons}, i.e.
$$
   \eps\partial_t f_\eps + \sfT f_\eps = \sfL R_\eps \,,\qquad 
   \partial_t \rho_\eps + \nabla_x\cdot \sum_{i=1}^{N_l} \intv{v R_{\eps,i}} = 0 \,,
$$
in the sense of distributions. The limiting equations are equivalent to the distributional formulation of the heat equation
\eqref{heat-eq} for $\rho_0$ with the initial condition $\rho_0(t=0) = \intv{f_I}$. The uniqueness of the solution of this problem
implies the weak convergence of $f_\eps$ to $\rho_0 F$. This completes the proof of Theorem \ref{thm:macro}.

% % % % % % % %BIBLIOGRAFIA % % % % % % % % % %
%\newpage
%\pagestyle{plain}
%\pagenumbering{roman}
\noindent {\bf Acknowledgments.}~~This work has been supported by the Austrian Science Fund, grants no. W1245 and F65. 
C.S. also acknowledges support by the Fondation Sciences
Math\'ematiques de Paris, and by Paris Science et Letttres. The authors are grateful for the hospitality at Sorbonne Universit\'e
and at Universit\'e Paris Dauphine.

% % % % % % % % % % % % % % % % % % % % % % % % %
% ho scelto l'ordine alfabetico in base al cognome del primo autore
% % % % % % % % % % % % % % % % % % % % % % % % % 

\end{document}